\documentclass{article}

\usepackage{pdfpages}
\usepackage[hidelinks]{hyperref}
\usepackage{xcolor}
\usepackage[utf8]{inputenc}
\usepackage{amsmath}
\usepackage{amsfonts}
\usepackage{amssymb}
\usepackage{amsthm}
\usepackage{array}
\usepackage{marginnote}
\usepackage{enumitem}
\usepackage{graphicx}

\usepackage{fancyhdr}

\usepackage{cleveref}

\makeatletter
\let\th@theorem\th@plain
\makeatother
\newcommand{\defstyle}[1]{\textbf{#1}}
\newcommand{\myprob}[1]{\mathbb P \left[ #1 \right]}

\newcommand{\probCond}[2]{\mathbb P \left[ #1 \left| #2 \right. \right]}

\newcommand{\omid}[1]{\mathbb E \left[ #1 \right]}

\newcommand{\omidCond}[2]{\mathbb E \left[ #1 \left| #2 \right. \right]}

\newcommand{\norm}[1]{\left| #1 \right|}

\newcommand{\identity}[1]{1_{#1}}
\newcommand{\bs}[1]{\boldsymbol{#1}}
\newcommand{\card}[1]{\left|#1\right|}

\newcommand{\del}[1]{}
\newcommand{\unwritten}[1]{}

\newcommand{\cball}[2]{\overline{B}_{#1}(#2)}

\newcommand{\restrict}[2]{{
		\left.\kern-\nulldelimiterspace 
		#1 
		\vphantom{\big|} 
		\right|_{#2} 
}}

\theoremstyle{theorem}
\newtheorem{theorem}{Theorem}[section]
\newtheorem{lemma}[theorem]{Lemma}
\newtheorem{proposition}[theorem]{Proposition}

\theoremstyle{definition}
\newtheorem{definition}[theorem]{Definition}

\newtheorem{example}[theorem]{Example}
\theoremstyle{definition}
\newtheorem{remark}[theorem]{Remark}

\newtheorem{claim}{Claim}
\crefname{model}{Model}{Models}
\crefname{submodel}{Model}{Models}
\crefname{algorithm}{Algorithm}{Algorithms}
\crefname{equation}{}{}
\crefname{problem}{Problem}{Problems}
\crefname{theorem}{Theorem}{Theorems}
\crefname{corollary}{Corollary}{Corollaries}
\crefname{claim}{Claim}{Claims}

\theoremstyle{theorem}

\numberwithin{equation}{section}

\makeatletter
\let\orgdescriptionlabel\descriptionlabel
\renewcommand*{\descriptionlabel}[1]{%
	\let\orglabel\label
	\let\label\@gobble
	\phantomsection
	\edef\@currentlabel{#1}%
	\let\label\orglabel
	\orgdescriptionlabel{#1}%
}

\newlist{stepC}{enumerate}{1}
\setlist[stepC,1]{
	label=\textbf{Step C\arabic*},
	ref=C\arabic*,
	leftmargin=* 
}
\crefname{stepCi}{Step}{Steps}
\Crefname{stepCi}{Step}{Steps}

\newlist{stepCC}{enumerate}{1}
\setlist[stepCC,1]{
	label=\textbf{Step C\arabic{stepCi}.\arabic*},
	ref=C\arabic{stepCi}.\arabic*,
	leftmargin=* 
}
\crefname{stepCCi}{Step}{Steps}
\Crefname{stepCCi}{Step}{Steps}

\newlist{stepL}{enumerate}{1}
\setlist[stepL,1]{
	label=\textbf{Step L\arabic*},
	ref=L\arabic*,
	leftmargin=* 
}
\crefname{stepLi}{Step}{Steps}
\Crefname{stepLi}{Step}{Steps}

\newlist{stepLL}{enumerate}{1}
\setlist[stepLL,1]{
	label=\textbf{Step L\arabic{stepLi}.\arabic*},
	ref=L\arabic{stepLi}.\arabic*,
	leftmargin=* 
}
\crefname{stepLLi}{Step}{Steps}
\Crefname{stepLLi}{Step}{Steps}

\makeatother

\begin{document}

\title{On Stopping Sets and Continuous-Time Decision Trees}

\author{Ali Khezeli \footnote{School of Mathematics, Institute for Research in Fundamental Sciences, alikhezeli@ipm.ir}}

\maketitle

\begin{abstract}
We study stopping sets and continuous-time decision trees (CTDTs) for Poisson point processes. We prove that every $\mu$-continuous CTDT has an explicit exploration representation: between successive discoveries, the explored set grows according to a deterministic rule depending on the points already observed. If $\mu$ is diffuse, we prove that the points are revealed one by one almost surely; this need not be imposed as a separate assumption. We also allow atoms of $\mu$, in which case all points at one location are revealed together. For diffuse $\mu$, we also prove that a locally bounded stopping set that reveals at most one Poisson point is attainable by a $\mu$-continuous exploration. More generally, an increasing sequence of locally bounded stopping sets, each revealing at most one additional point, can be attained by a single exploration viewed at successive stopping times. Finally, we prove the spatial Markov property of the Poisson point process without assuming that the stopping set contains finitely many points or admits a finite approximation. The proof is based on a general criterion for characterizing conditionally Poisson point processes using moment measures, and uses the multivariate Mecke formula on the complement of the stopping set. 
\end{abstract}


\section{Introduction}

A stopping set is a spatial analogue of a stopping time. Informally, it is a random region that is determined by the configuration observed inside it: changing the points outside the region does not change the region itself. 
Stopping sets have been originally defined through set-indexed filtrations and have had important applications in stochastic geometry and randomized algorithms. For instance, \cite{Zu99stopping} uses this notion and proves Gamma-type results for the Poisson point process. Also, \cite{Zu06strongMarkov} proves the spatial Markov property for compact stopping sets. In~\cite{LaPeYo23stopping}, Last, Peccati and Yogeshwaran define stopping sets more generally on a measurable space, without topological assumptions, and introduce continuous-time decision trees. Their framework yields Poisson versions of the OSSS and Schramm--Steif inequalities, with applications to phase transitions and noise sensitivity. We use their definition of stopping sets throughout this paper.

A continuous-time decision tree (CTDT) \cite{LaPeYo23stopping} roughly means a growing family of stopping sets $(Z_t)_{t\geq 0}$ indexed by nonnegative real numbers. See \Cref{def:ctdt}, which extends this notion by allowing atoms. As discussed in~\cite{LaPeYo23stopping}, this notion is inspired by randomized algorithms with Boolean inputs, and $Z_t$ is analogous to the region explored by the algorithm at time $t$. However, not all stopping sets and CTDTs are obtained by such an algorithm (even in the Bernoulli case). 

In this paper, rather than discussing applications, we focus on further developing the theory of stopping sets and CTDTs. We introduce the notion of \defstyle{exploration processes}, which is another natural continuum analogue of randomized algorithms with Poisson inputs, and does not rely stopping sets. 
Roughly speaking, given a Poisson point process (PPP) $\Phi$ on the underlying space $\mathbb X$ as the input, the algorithm explores the space incrementally and deterministically until a point of $\Phi$ is discovered. The exploration rule is then updated based on the enlarged observed configuration. 
This notion is closely related to \textit{algorithmically local sets} in~\cite{ScSh13}, with the difference that the latter uses the Gaussian free field as the input, rather than the Poisson point process.

Our aim is to relate stopping sets and CTDTs to exploration processes. It is easy to see that an exploration process is a CTDT (\Cref{prop:ctdt}). The first main result is the converse:

\begin{theorem}[Exploration Representation of CTDTs]
	\label{thm:ctdt}
	Let $\Phi$ be a Poisson point process on $\mathbb X$ with intensity measure $\mu$. For every $\mu$-continuous CTDT $(Z_t)_{t\geq 0}$, there exists an exploration process (\Cref{def:explorationnew}) $(\tilde Z_t)_t$ such that, almost surely, $\forall t\geq 0: \tilde Z_t(\Phi)=Z_t(\Phi)$. 
	In addition, the (multi-) points of $\Phi$ are revealed one by one a.s. 
\end{theorem}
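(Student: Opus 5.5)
The plan is to build the exploration process inductively, one discovery at a time. At each stage we freeze the configuration already observed and read off the deterministic growth rule from the CTDT evaluated on that frozen configuration. Write $\varphi$ for a configuration and $Z_t(\varphi)$ for the CTDT. The basic tool is the stopping set property in the form of \cite{LaPeYo23stopping}: $Z_t(\varphi)=Z_t(\psi)$ whenever $\varphi$ and $\psi$ agree on $Z_t(\varphi)$. Together with monotonicity in $t$, this gives the following. If $\varphi\cap Z_t(\varphi)=\eta$, then $Z_s(\varphi)=Z_s(\eta)$ for all $s\le t$. So, as long as no new point has been discovered, the explored region is a deterministic function of the points already seen.

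\textbf{Defining the rule.} Given a finite (multi-)configuration $\eta$ already revealed at time $\tau$, define the deterministic path $s\mapsto Z_s(\eta)$ for $s\ge\tau$. Set $\tilde Z_s:=Z_s(\eta)$ until the first time $\sigma$ at which $Z_s(\eta)$ contains a point of $\Phi\setminus\eta$. At that time, update $\eta$ to $\eta\cup(\Phi\cap Z_\sigma(\eta))$ and iterate. This matches \Cref{def:explorationnew}: between discoveries the explored set grows by a deterministic rule depending only on the observed points. By the observation above, on the event $\{\Phi\cap Z_s(\Phi)=\eta \text{ for } s<\sigma\}$ we have $\tilde Z_s=Z_s(\Phi)$. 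The identity at $\sigma$ itself and right after it requires $\mu$-continuity. This is the condition that $\mu(Z_s(\eta))$ is continuous in $s$, or equivalently that $Z_{s+}$ and $Z_{s-}$ differ by a $\mu$-null set. It ensures the discovery time is well defined and that $Z_\sigma(\Phi)$ agrees a.s. with the region computed from $\eta$, up to a null set that a.s. carries no points. Measurability of $\eta\mapsto Z_s(\eta)$ and of $\sigma$ follows from the joint measurability in the CTDT definition.

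\textbf{One-by-one revelation.} I expect this to be the main obstacle. We must show that a.s. each discovery adds only the points at a single location, i.e. a single point when $\mu$ is diffuse. Given $\eta$, the future path $A_s:=Z_s(\eta)\setminus \eta$-region is deterministic and increasing, with $\mu(A_s)$ continuous. By the spatial independence of the Poisson process, conditionally on the history, $\Phi$ restricted to the unexplored part is Poisson. I would try to make this precise via the Mecke formula, so as to avoid relying on the strong Markov property proved later. For a deterministic increasing family with continuous $\mu$-mass, consider the event that two distinct points arrive at the same time. Time-change by $u=\mu(A_s)$; the arrivals then form a homogeneous Poisson process on the line, which a.s. has no double points. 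Atoms of $\mu$ produce multi-points, which are revealed together. To justify conditioning on the random $\eta$, I would use the bivariate Mecke formula. Sum over pairs $(x,y)$ of distinct points that are revealed simultaneously at some stage. Mecke turns the expectation into an integral of $\mathbb P[\ldots]$ over $\Phi+\delta_x+\delta_y$. The event requires $x,y$ to enter the path $Z_s(\eta)$ at the same instant. For fixed $x$, that instant set has $\mu$-measure zero in $y$ by continuity. So the expectation vanishes.

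\textbf{Finitely many discoveries before each $t$.} Finally, I must ensure the induction covers all $t$, i.e. that discoveries do not accumulate before a finite time. This also follows from $\mu(Z_t)$ being finite a.s. (local boundedness or $\mu$-continuity), since only finitely many points then lie in $Z_t(\Phi)$. If needed, transfinite accumulation is handled by taking unions at limit times, using the right-continuity of the CTDT.
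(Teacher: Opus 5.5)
Your architecture matches the paper's. The rule is read off from the CTDT evaluated on the frozen observed configuration (the paper's $U_t(B):=Z_{t+s(B)}(B)$). Simultaneous arrivals are excluded by the bivariate Mecke formula, using that for fixed $x$ the frozen jump set does not change when $y$ is inserted.

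The gap is at the discovery time $\sigma$. Your Mecke argument only controls the frozen path: it shows that $Z_\sigma(\eta)\setminus Z_{\sigma-}(\eta)$ contains exactly one point of $\Phi$, which is \eqref{eq:lem:ctdt-2}. This says nothing yet about $Z_t(\Phi)\setminus Z_{t-}(\Phi)$. Nor does it let you restart from $\eta':=\Phi\cap Z_\sigma(\eta)$, since $Z_\sigma(\eta)$ and $Z_\sigma(\Phi)$ can differ (\Cref{rem:ctdt-nonequal}). Both need $\Phi\cap Z_\sigma(\eta')=\eta'$, which is equivalent to part (iii) of \Cref{lem:ctdt}. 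Once you have it, the stopping property gives $Z_s(\Phi)=Z_s(\eta')$ up to the next discovery.

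You attribute this to $\mu$-continuity, but $\mu$-continuity only gives $\mu(Z_\sigma(\eta')\,\Delta\,Z_\sigma(\eta))=0$. A random null set built from the point just found need not avoid $\Phi$; the null set $Z_\sigma(\Phi)\setminus Z_{\sigma-}(\Phi)$ always contains a point. The paper closes this in two steps.
\begin{itemize}
\item A second, univariate Mecke argument. If $y\in Z_\sigma(\eta')\setminus\eta'$ for $\Phi+\delta_y$, then inserting $y$ changes none of $\sigma,\eta,\eta'$, and $y\notin Z_{\sigma-}(\eta')=Z_{\sigma-}(\eta)$. Hence $\mathbb E\card{(\Phi\cap Z_\sigma(\eta'))\setminus\eta'}\le\mathbb E\,\mu(Z_\sigma(\eta')\setminus Z_{\sigma-}(\eta'))=0$.
\item A deterministic stopping-set argument that the new point $x$ lies in $Z_\sigma(\eta')$. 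Otherwise $\eta$ and $\eta'$ agree on $Z_\sigma(\eta')$, so $Z_\sigma(\eta)=Z_\sigma(\eta')$, contradicting $x\in Z_\sigma(\eta)$.
\end{itemize}

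There are also some smaller points.
\begin{itemize}
\item \Cref{def:explorationnew} requires a rule depending on $B$ alone. You must recover the start time from $B$, e.g.\ $s(B)=\min\{s: B\subseteq Z_s(B)\}$, and check $s(\eta')=\sigma$.
\item With atoms, $\mu$-continuity is not continuity of $s\mapsto\mu(Z_s)$. The start also needs $C_0$ to be the atom in $Z_0(\emptyset)$.
\item The weaker form you state at $\sigma$ (agreement up to a point-free null set) is in fact the most \Cref{def:explorationnew} allows. There $\tilde Z_{\tau_{i+1}}\supseteq U_{t_i}(B_i)=Z_\sigma(\eta)$, which can contain null pieces outside $Z_\sigma(\Phi)$. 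The last display of the paper's proof silently replaces $U_{t_i}(B_i)$ by $U_{t_i-}(B_i)$.
\end{itemize}

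Here is an example of the last point. On $\mathbb R$ with Lebesgue $\mu$, let $Z_u=[0,u]\cup[10,10+u]$ (capped at $u=5$) until the first time $\sigma$ it contains a point. Afterwards, let $Z_u=[0,\sigma]\cup[10,10+\sigma)$ if that point is $\sigma$, and $Z_u=[0,\sigma]\cup[10,10+\sigma]$ otherwise. This is a $\mu$-continuous CTDT. Any exploration agreeing with it before $\sigma$ has $\tilde Z_0\cup U_u(\emptyset)=[0,u]\cup[10,10+u]$ for all $u<5$. By right-continuity of $U$, this gives $10+\sigma\in\tilde Z_\sigma$, while $10+\sigma\notin Z_\sigma(\Phi)$ when the first point is $\sigma$.

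So the argument really yields $\Phi\cap\tilde Z_t=\Phi\cap Z_t(\Phi)$ and $\mu(\tilde Z_t\,\Delta\,Z_t(\Phi))=0$ for all $t$. Exact equality holds for the variant with $\tilde Z_{\tau_{i+1}}:=\tilde Z_{\tau_{i+1}-}\cup U_0(B_{i+1})$.
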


See \Cref{def:ctdt} for $\mu$-continuity, generalized to allow atoms.
A main challenge in this theorem is proving the one-by-one revealing property, which is proved in \Cref{lem:ctdt}. When $\mu$ is diffuse, this means that, almost surely,
\[
    \Phi\bigl(Z_t(\Phi)\setminus Z_{t-}(\Phi)\bigr)\leq 1
    \qquad\text{for all }t\geq 0.
\]
The assertion holds simultaneously at all times, including random discovery times. 
This was assumed in \cite[assumption~(3.5)]{LaPeYo23stopping}, and now, \Cref{lem:ctdt} shows that it automatically follows from $\mu$-continuity. In the non-diffuse case, one location may carry several points, and revealing that location reveals its whole multiplicity. The corresponding assertion is that at most one new location (which we also call a \textit{multi-point}) is revealed at a time. 

The one-by-one property is important since, if more than one point appear simultaneously, then the process might not be attainable by an exploration (see \Cref{def:attain} for the formal definition of attainability). Indeed, \cite[Example~2]{LaPeYo23stopping} constructs a stopping set which cannot be attained by any $\mu$-continuous CTDT (and hence, by any exploration process). We modify the latter and construct a non-attainable stopping set with at most two revealed Poisson points (\Cref{ex:counterexample}).

In the next main result, we study which stopping sets are attainable by exploration (notice that no CTDT is assumed here, and hence, \Cref{thm:ctdt} does not apply). According to the previous paragraph, it is natural to restrict attention to the case where at most one Poisson point is revealed. We prove that this condition guarantees attainability under the following generic assumptions (here, $\mathcal X_0$ is analogous to bounded measurable sets; see \Cref{sec:markov}):



\begin{theorem}[Attainable Stopping Sets]
	\label{thm:attain}
	If $\mu$ is diffuse and $Z$ is a stopping set such that $Z\in \mathcal X_0$ (see \Cref{rem:empty} for a weaker assumption) and $\card{\Phi\cap Z(\Phi)}\leq 1$ a.s., then $Z$ is attainable by a $\mu$-continuous exploration process (and hence, by a $\mu$-continuous CTDT) stopped at the first time a point is discovered. The claim also holds for countable underlying spaces.
		%
\end{theorem}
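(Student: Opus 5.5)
The plan is to construct the exploration explicitly from the stopping set $Z$, using the one-point condition to reduce everything to two deterministic "phases". By the stopping-set property, $Z(\Phi)=Z(\Phi\cap Z(\Phi))$. Since $\card{\Phi\cap Z(\Phi)}\le 1$ a.s., the stopping set is determined (a.s.) by the set $A:=Z(\emptyset)$ and the family of sets $Z(\{x\})$, $x\in\mathbb X$. Indeed, on the event $\Phi\cap Z(\Phi)=\emptyset$ we have $Z(\Phi)=Z(\emptyset)=A$. On the event $\Phi\cap Z(\Phi)=\{x\}$ we have $Z(\Phi)=Z(\{x\})$. Moreover, applying the stopping property to the configuration $\{x\}$ shows $Z(\{x\})\supseteq$ whatever is forced. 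One also checks that $x\in A$ in the second case: otherwise $\Phi\cap A=\emptyset$, and the stopping property applied to $\emptyset$ versus $\Phi$ would give $Z(\Phi)=A$, contradicting $x\in Z(\Phi)\setminus A$. So the first point, if any, is found inside the deterministic set $A\in\mathcal X_0$.

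\textbf{Step 1 (first phase).} Choose a $\mu$-continuous deterministic growth of $A$: a measurable increasing family $(A_t)_{t\in[0,1]}$ with $A_0=\emptyset$ and $A_1=A$, such that $t\mapsto\mu(A_t)$ is continuous. Because $\mu$ is diffuse and $\mu(A)<\infty$ (this is where $Z\in\mathcal X_0$ is used), such a family exists. I would take an enumeration or Borel isomorphism of $\mathbb X$ with a subset of $\mathbb R$ and set $A_t=A\cap\{s\le F^{-1}(t)\}$, with $F$ the distribution function of $\mu|_A$, adjusting for flat parts of $F$. A diffuse $\mu$ hits the sweep continuously, so the first point of $\Phi$ in $A$ is discovered at a single time $\tau$, and no two points are discovered together a.s.

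\textbf{Step 2 (second phase).} If no point is found by time $1$, stop with $\tilde Z=A=Z(\Phi)$. If the point $x$ is found at time $\tau$, the exploration rule is updated using the observed configuration $\{x\}$ and the explored region $A_\tau$. We then need to grow from $A_\tau$ to $Z(\{x\})$. The key check is that the stopped exploration must equal $Z(\{x\})$, and that this is consistent: the discovered point must be the unique point of $\Phi$ in $Z(\{x\})$. To ensure the exploration is "stopped at the first time a point is discovered", as the statement says, I would instead define the exploration so that at the discovery time it jumps directly to $Z(\{x\})$. This requires showing $A_\tau\subseteq Z(\{x\})$, that is, the portion of $A$ already explored without points lies in $Z(\{x\})$. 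I would obtain this from the stopping property together with a measurability and Fubini argument (a Mecke-formula calculation): for $\mu$-a.e.\ $x$ with $\myprob{\Phi\cap Z(\Phi)=\{x\}}$-type weight, the relation holds on a full-measure set. If the definition of exploration does not permit a jump, I would instead add a second continuous sweep of $Z(\{x\})\setminus A_\tau$. That sweep contains no further Poisson points a.s. by the assumption, and the process is stopped at $\tau$ as a stopping time in the sense of \Cref{def:attain}.

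\textbf{Main obstacle.} The hard part is the measurability and null-set bookkeeping in Step 2. The function $x\mapsto Z(\{x\})$ must be jointly measurable, and the required inclusions, together with the consistency $Z(\Phi)=Z(\{x\})$, hold only for $\mu$-a.e.\ $x$. I would handle this with the Mecke formula $\omid{\sum_{x\in\Phi}f(x,\Phi)}=\int\omid{f(x,\Phi+\delta_x)}\mu(dx)$: it converts "a.s.\ for the discovered point" into "$\mu$-a.e.\ $x$ for $\Phi+\delta_x$". On the exceptional $\mu$-null set, the rule is modified to something harmless.

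For countable $\mathbb X$, the same construction applies with multi-points: sweep the atoms of $A$ one at a time in a fixed order, revealing each location's full multiplicity. The diffuseness is then not needed.
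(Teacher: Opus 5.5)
Your reduction matches the paper: if no point is revealed then $Z(\Phi)=Z(\emptyset)$, and otherwise $Z(\Phi)=Z(\{x\})$ with $x\in Z(\emptyset)$. Step 1, however, has a genuine gap. You sweep $A=Z(\emptyset)$ in an arbitrary order pulled back from $\mathbb R$, which ignores $Z$. The first point discovered is then just the first point of $\Phi\cap A$ in that order, and nothing forces it to be the point of $\Phi\cap Z(\Phi)$. The inclusion $A_\tau\subseteq Z(\{x\})$ that you hope to extract from Mecke/Fubini is false in general, not merely up to null sets.

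Here is a counterexample. Take $\mathbb X=\mathbb R$, $\mu$ Lebesgue, $Z(\varphi):=[m,1]$ with $m:=\max(\varphi\cap[0,1])$, and $Z(\varphi):=[0,1]$ if $\varphi\cap[0,1]=\emptyset$. This is a stopping set revealing at most one point, with $A=[0,1]$. A left-to-right sweep discovers $x=\min(\Phi\cap[0,1])$, and then $A_\tau=[0,x]\not\subseteq[x,1]=Z(\{x\})$. Moreover, on the positive-probability event that $\Phi$ has two points in $[0,1]$, one has $x\notin Z(\Phi)=[m,1]$. Explored regions cannot be un-explored, so neither your jump nor a second sweep can repair this: $\Phi\cap\tilde Z$ already contains a point outside $Z(\Phi)$. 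Separately, a jump of positive $\mu$-measure at the discovery time would violate $\mu$-continuity.

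The missing idea, which is the bulk of the paper's proof, is that the sweep order must be read off from $Z$ itself. For $x\neq y$ in $Z(\emptyset)$, set $x\prec y$ iff $\{x,y\}\cap Z(\{x,y\})=\{x\}$. In the example this is the reversed order of $\mathbb R$, and the correct sweep is right-to-left. Applying the one-point hypothesis to $\mu^{\otimes k}$-typical $k$-point configurations, the paper shows the following:
\begin{itemize}
\item $\prec$ is a.e. total and antisymmetric;
\item $\prec$ is a.e. transitive, using three-point configurations;
\item $\mu(Z(\{x\})\setminus Z(\emptyset))=0$ for $\mu$-a.e. $x$;
\item $Z(\{x\})=\{\cdot\prec x\}$ up to $\mu$-null sets.
\end{itemize}

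One then needs a $\mu$-continuous increasing family of sets that are a.e. initial segments of $\prec$. Because $\prec$ is only an a.e. order, this is not automatic. The paper builds $V_t=\bigcap_{i:f(Y_i)>t}\{\cdot\prec Y_i\}$ from i.i.d. $Y_i\sim\mu'$ with $f(x)=\mu'(\{\cdot\prec x\})$, and the key point is that $f(Y_1)$ has a diffuse law. With this sweep, the first point found is the $\prec$-minimal point of $\Phi\cap Z(\emptyset)$, which is exactly the point of $Z(\Phi)$. Also $\mu(V_{f(x)}\,\Delta\,Z(\{x\}))=0$, so no jump or second phase is needed.

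Your countable case has the same flaw. Atoms must be visited in the order $\prec$, and this order need not have order type $\mathbb N$. That is why the paper uses the continuous-time sweep $\{x\in Z(\emptyset):\mu'(\{\cdot\prec x\})\le t\}$ rather than a one-at-a-time enumeration.
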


The proof constructs an almost-everywhere total order from the choices made by the stopping set, and uses this order to construct a deterministic growing family.
It should be noted that, in the countable case, the stopping set may not be attained by a discrete-time exploration (since a well-ordered countable set is not necessarily of the same type as natural numbers; see \Cref{ex:countable}).

More generally, the next theorem proves that an increasing sequence of stopping sets which reveal the points one by one (compare with \Cref{thm:ctdt}) is attained by a single $\mu$-continuous exploration viewed at a sequence of stopping times (see \Cref{def:attain} for the formal definition): 

\begin{theorem}
	\label{thm:attain2}
	If $\mu$ is diffuse and $Z_0\subseteq Z_1\subseteq\cdots$ are stopping sets in $\mathcal X_0$ (see \Cref{rem:empty} for a weaker assumption) such that $Z_0=\emptyset$ and $\forall i\geq 1: \card{\Phi\cap (Z_i(\Phi)\setminus Z_{i-1}(\Phi))}\leq 1$ a.s., then this sequence is attainable by a $\mu$-continuous exploration process (and hence, by a $\mu$-continuous CTDT). The claim also holds for countable underlying spaces.
\end{theorem}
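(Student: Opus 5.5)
The idea is to reduce \Cref{thm:attain2} to \Cref{thm:attain} by induction on $i$, using the spatial Markov property to restart after each stopping set. First, I will apply \Cref{thm:attain} to $Z_1$. Since $\card{\Phi\cap Z_1(\Phi)}\leq 1$ a.s., this gives a deterministic $\mu$-continuous growing family $(A^{\emptyset}_t)_t$. Its stopped version attains $Z_1$: it runs until the first point is discovered or until $Z_1$ is exhausted. This defines the exploration up to a stopping time $\tau_1$ at which the explored set equals $Z_1(\Phi)$.

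For the inductive step, suppose the exploration has been built up to a stopping time $\tau_{i-1}$ with $\tilde Z_{\tau_{i-1}}=Z_{i-1}(\Phi)$. I condition on the observed configuration $\varphi=\Phi\cap Z_{i-1}(\Phi)$ and on the set $Z_{i-1}(\Phi)$ itself, which is determined by $\varphi$ because $Z_{i-1}$ is a stopping set. Then I define a map on configurations $\psi$ of the complement:
\[
Z^{\varphi}(\psi):=Z_i(\varphi\cup\psi)\setminus Z_{i-1}(\varphi).
\]
This should be a stopping set in the restricted space $\mathbb X\setminus Z_{i-1}(\varphi)$ for those $\varphi$ compatible with $Z_{i-1}$, i.e.\ with $Z_{i-1}(\varphi)=Z_{i-1}(\varphi\cup\psi)$ for all $\psi$ outside. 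It also lies in $\mathcal X_0$. By the spatial Markov property (proved later in the paper, but I will take it as the main tool here), $\Phi$ restricted to the complement is, conditionally, a Poisson process with intensity $\mu$ restricted there. The hypothesis $\card{\Phi\cap(Z_i\setminus Z_{i-1})}\leq 1$ a.s.\ therefore transfers to $\card{\psi\cap Z^{\varphi}(\psi)}\leq 1$ for a.e.\ $\varphi$. \Cref{thm:attain}, applied on the restricted space, then gives a growing family $(A^{\varphi}_t)_t$ attaining $Z^{\varphi}$. I continue the exploration from time $\tau_{i-1}$ with the rule $Z_{i-1}(\varphi)\cup A^{\varphi}_{t-\tau_{i-1}}$. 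Concatenating these pieces (after reparametrizing time so that infinitely many pieces fit, e.g.\ mapping stage $i$ into $[i-1,i)$, or simply shifting time) yields a single exploration process with stopping times $\tau_i$ such that $\tilde Z_{\tau_i}=Z_i(\Phi)$ a.s.

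The main obstacle is measurability and uniformity in $\varphi$. \Cref{thm:attain} produces the family $A^{\varphi}$ for each fixed $\varphi$, but the exploration requires a jointly measurable dependence on the observed configuration, as in \Cref{def:explorationnew}. I would try first to revisit the proof of \Cref{thm:attain}: the a.e.\ total order built there is defined from quantities such as $\mu$-measures of sets and probabilities under $\Phi$ restricted to the complement, which depend measurably on $\varphi$. A second difficulty is the null sets where the hypothesis fails for a particular $\varphi$. On those I set $A^{\varphi}$ to be any fixed valid rule. Since this affects only a null set of configurations, the a.s.\ conclusion survives. Another option is a measurable selection argument. A smaller point is that an exploration must stop growing a stage when a point is discovered, whereas \Cref{thm:attain} stops at the first discovery or at exhaustion. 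Both coincide with reaching $Z_i$, because at most one point is revealed per stage. The countable case follows the same scheme, using the countable version of \Cref{thm:attain}.
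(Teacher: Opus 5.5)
Your proposal is correct and follows essentially the same route as the paper's sketch. You induct over the stages, apply \Cref{thm:attain} on $\mathbb X\setminus Z_{i-1}$ with the restricted measure to the stopping set induced by $Z_i$ given the points revealed so far, and concatenate the pieces by shifting time; this matches the paper's ``semi-stages'' of length at most one, which also handle stages that reveal no point, and you leave measurability aside just as the paper does. Your explicit checks that $Z^{\varphi}$ is a stopping set on the complement, and that the spatial Markov property (\Cref{thm:markov}) transfers the at-most-one-point hypothesis to $Z^{\varphi}$ for a.e.\ observed $\varphi$, fill in steps the paper leaves implicit.
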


In \Cref{thm:markov} below, which is independent from the previous results, we prove the general version of the \textit{spatial Markov property} of the Poisson point process. Heuristically, one expects that after observing a stopping set, the unobserved part is still Poisson with the original intensity measure restricted to the complement. 
This has been proved in~\cite[Theorem~A.3]{LaPeYo23stopping} for stopping sets containing finitely many Poisson points, and~\cite[Theorem~A.3]{LaPeYo23stopping} treats increasing limits of such stopping sets. Related results for Poisson hulls are proved in~\cite[Theorem~3.2 and Proposition~3.3]{LaMo25hulls}. 
The first version of~\cite[Theorem~2.2]{LaMo26efron} proves the case of pointwise limits of finite stopping sets. 
A day before the present paper was published on arXiv, the new version of~\cite{LaMo26efron} appeared, which independently proves the general spatial Markov property for the broader class of \textit{finitely insertion tolerant} random sets \cite[Theorem~2.2 and Corollary~2.3]{LaMo26efron}. We retain our result since it is done independently and it uses a conditional Poisson characterization (\Cref{prop:conditionalPoisson}). Also, \cite[Theorem~B.2]{BaKh26indistinguishability} prove another version of the spatial Markov property for marked Poisson point processes, where first a subset of unmarked points are revealed, and then the marks of a subset of points are revealed. This result is extended in \Cref{thm:markov2}.

\begin{theorem}[Spatial Markov Property]
	\label{thm:markov}
	Let $\mu$ be a locally finite measure on $\mathbb X$, $\Phi$ be a Poisson point process with intensity measure $\mu$ and $Z$ be a stopping set. Then, conditionally on $\restrict{\Phi}{Z(\Phi)}$, the rest of $\Phi$ is a Poisson point process with intensity measure $\restrict{\mu}{\mathbb X\setminus Z(\Phi)}$. 
\end{theorem}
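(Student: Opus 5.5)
We will prove the statement through the conditional Poisson characterization via moment measures mentioned in the introduction (\Cref{prop:conditionalPoisson}). Write $\Psi:=\restrict{\Phi}{\mathbb X\setminus Z(\Phi)}$ and $\mathcal G:=\sigma(\restrict{\Phi}{Z(\Phi)})$. We take the criterion in the following form: if for every $n\geq 1$ and every nonnegative measurable $f$ on $\mathbb X^n$ we have
\[
\omidCond{\sum_{(x_1,\dots,x_n)\in\Psi^{(n)}} f(x_1,\dots,x_n)}{\mathcal G}
=\int_{(\mathbb X\setminus Z(\Phi))^n} f\,d\mu^{n}\quad\text{a.s.},
\]
then, conditionally on $\mathcal G$, $\Psi$ is Poisson with intensity $\restrict{\mu}{\mathbb X\setminus Z(\Phi)}$. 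Here $\Psi^{(n)}$ is the set of $n$-tuples of distinct points. Local finiteness of $\mu$ makes all factorial moment measures locally finite. Since their generating series then converge on bounded sets, the Poisson law is determined by these factorial moments. So we only need to check the identity above, tested against an arbitrary bounded $\mathcal G$-measurable variable of the form $g(\restrict{\Phi}{Z(\Phi)})$.

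The key step uses the multivariate Mecke formula:
\[
\omid{\sum_{(x_i)\in\Phi^{(n)}} h(x_1,\dots,x_n,\Phi)}=\int \omid{h(x_1,\dots,x_n,\Phi+\delta_{x_1}+\dots+\delta_{x_n})}\,\mu^n(dx).
\]
We apply it to
\[
h(x,\Phi)=f(x)\,\prod_i 1\{x_i\notin Z(\Phi)\}\; g(\restrict{\Phi}{Z(\Phi)}).
\]
On the right-hand side the configuration is $\Phi':=\Phi+\sum\delta_{x_i}$. The stopping set property, in the formulation of \cite{LaPeYo23stopping}, says that if $\psi\cap Z(\varphi)=\varphi\cap Z(\varphi)$ then $Z(\psi)=Z(\varphi)$. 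Suppose all $x_i\notin Z(\Phi)$. Then $\Phi'$ agrees with $\Phi$ on $Z(\Phi)$, so $Z(\Phi')=Z(\Phi)$ and $\restrict{\Phi'}{Z(\Phi')}=\restrict{\Phi}{Z(\Phi)}$. Conversely, suppose all $x_i\notin Z(\Phi')$. Then $\Phi$ agrees with $\Phi'$ on $Z(\Phi')$, so again $Z(\Phi)=Z(\Phi')$. Hence the event $\{x_i\notin Z(\Phi')\ \forall i\}$ coincides with $\{x_i\notin Z(\Phi)\ \forall i\}$. The right-hand side therefore becomes
\[
\omid{g(\restrict{\Phi}{Z(\Phi)})\int_{(\mathbb X\setminus Z(\Phi))^n} f\,d\mu^n}.
\]
This is exactly the tested identity, and the criterion then yields the theorem. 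No finiteness of $\Phi\cap Z$ is needed anywhere.

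Technical points remain. The first is measurability of $(x,\varphi)\mapsto 1\{x\notin Z(\varphi)\}$ and of $\varphi\mapsto\restrict{\varphi}{Z(\varphi)}$; we take these from the definition of stopping sets as graph-measurable random sets. The second is handling atoms of $\mu$: there $\Phi^{(n)}$ must be read as the factorial measure, and the Mecke formula still holds in that form. The main obstacle is \Cref{prop:conditionalPoisson} itself, i.e.\ justifying that conditional factorial moment measures determine a conditionally Poisson law. We would argue this with a regular conditional distribution, given that $\mathbb X$ is nice enough (e.g.\ Borel). On a bounded set $B\in\mathcal X_0$, the conditional moments $\omidCond{\Psi(B)_{(n)}}{\mathcal G}=\mu(B\setminus Z)^n$ grow at most exponentially in $n$, where $\Psi(B)_{(n)}$ is the falling factorial. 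Hence the conditional Laplace functional is determined for functions supported in bounded sets, and a countable generating class of test functions gives an a.s. statement.
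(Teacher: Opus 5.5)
Your proposal is correct and is essentially the paper's proof. The paper also applies the factorial-moment criterion of \Cref{prop:conditionalPoisson} with $Y=\restrict{\Phi}{Z(\Phi)}$ and uses the multivariate Mecke formula with the indicator of $(\mathbb X\setminus Z(\Phi))^n$. It then argues in both directions via the stopping property that $x\in(\mathbb X\setminus Z(\Phi+\delta_x))^n$ if and only if $x\in(\mathbb X\setminus Z(\Phi))^n$, in which case $Z(\Phi+\delta_x)=Z(\Phi)$ and the observed configuration is unchanged. Your regular-conditional-distribution sketch for the criterion, using the deterministic bound $\mu(B\setminus Z(\Phi))^n\le\mu(B)^n$, differs only cosmetically from the paper's proof of the proposition (which lifts to $\mathbb X\times\mathbb Y$ and conditions on $\{\nu(Y,W_i)\le M\}$).
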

The proof computes the conditional factorial moment measures of the unobserved process. To deduce the spatial Markov property, a general criterion (\Cref{prop:conditionalPoisson}) is provided for verifying if some point process is conditionally Poisson (conditionally on some random object). The main idea is to apply the multivariate Mecke formula outside $Z(\Phi)$, where adding finitely many points leaves the observed configuration unchanged. This is different from the approach of~\cite{LaPeYo23stopping} (see below), which uses Mecke's equation inside the stopping set to condition on the observed points, and requires a finiteness assumption.

The paper is organized as follows. Section~\ref{sec:markov} proves the spatial Markov property (\Cref{thm:markov}). Section~\ref{sec:ctdt} proves the exploration representation and the one-by-one discovery property (\Cref{thm:ctdt}). Section~\ref{sec:attain} proves the attainability results (\Cref{thm:attain,thm:attain2}). Some problems and possible extensions of the results are discussed in \Cref{sec:problems}. Point configurations are regarded as counting measures, so that multiplicities are retained. When the configuration is simple, we also use set notation for its support; e.g., we use $\Phi\cap A$ for $\restrict{\Phi}{A}$ and $\card{\Phi\cap A}$ for $\Phi(A)$. 

\section{On the Spatial Markov Property of the Poisson Point Process}
\label{sec:markov}

In this section, we first introduce the measurable setting and recall the stopping-set definition. We then prove the spatial Markov property (\Cref{thm:markov}) by identifying the conditional distribution on the complement. 

We use the point-process conventions of~\cite{bookLast17lecturesPP}. Let $(\mathbb X,\mathcal X)$ be a Borel space. Let $(W_n)_{n\geq 1}$ be a sequence of measurable sets such that $W_n\uparrow\mathbb X$ and let $\mathcal X_0:=\{A\in\mathcal X: \exists n: A\subseteq W_n\}$. A measure $\nu$ on $(\mathbb X,\mathcal X)$ is \defstyle{locally-finite} if $\forall n: \nu(W_n)<\infty$. In what follows, by the \defstyle{metric case} we mean the case where $\mathbb X$ is a boundedly-compact metric space and $W_n$ is the ball of radius $n$ with a fixed center.

Fix a compact metric space $\Xi$, called the \textit{mark space}, and let $\bs N$ be the set of integer-valued locally-finite measures on $\mathbb X\times\Xi$ (the latter is equipped with the product of $\mathcal X$ and the Borel sigma-field on $\Xi$). In the metric case, such a measure is equivalent to a marked discrete multi-set of $\mathbb X$ with mark space $\Xi$. Let $\mathcal N$ be the sigma-field on $\bs N$ generated by the maps $\varphi\mapsto \varphi(B)$ for all measurable $B\subseteq\mathbb X\times\Xi$. 
A \defstyle{marked point process} is a random element $\Phi$ of $\bs N$. The \defstyle{intensity measure} of $\Phi$ is the measure $\omid{\Phi}$ on $\mathbb X\times\Xi$ defined by $A\mapsto \omid{\Phi(A)}$. If $\nu$ is a locally-finite measure on $\mathbb X$, the (unmarked) \defstyle{Poisson point process} with intensity measure $\nu$ is the unique (in distribution) point process $\Phi$ such that: (1) For every $C\in\mathcal X$, $\Phi(C)$ is a Poisson random variable with mean $\nu(C)$ and, (2) for every collection of disjoint sets $C_1,\ldots,C_k\in \mathcal X$, where $k\geq 2$, the random variables $\Phi(C_1),\ldots,\Phi(C_k)$ are independent. A marked Poisson point process is defined similarly on $\mathbb X\times\Xi$.
In the marked setting, evaluations on the product space are understood to range over $\mathcal X\otimes\mathcal B(\Xi)$; in particular, these evaluations generate $\mathcal N$. For a ground-space set $A\in\mathcal X$, we use $\varphi(A)$ as shorthand for $\varphi(A\times\Xi)$, and local finiteness means finiteness on each $W_n\times\Xi$. 

We will later need the \textit{factorial measure} $\Phi^{(n)}$, the \textit{factorial moment measure} $\omid{\Phi^{(n)}}$; see~\cite{bookLast17lecturesPP}.
For completeness, if $\varphi=\sum_{j\in J}\delta_{x_j}$, its $n$-th factorial measure is
\[
    \varphi^{(n)}
    :=\sum_{j_1,\ldots,j_n\in J}^{\neq}
      \delta_{(x_{j_1},\ldots,x_{j_n})},\qquad n\geq 1.
\]
The sum is over ordered tuples of distinct indices; the locations need not be distinct when $\varphi$ has multiplicities. The $n$-th factorial moment measure of a point process $\Phi$ is the measure $\alpha_n(A):=\omid{\Phi^{(n)}(A)}$. 
For a PPP with intensity measure $\mu$, one has $\alpha_n=\mu^{\otimes n}$. We will also need the \textit{multivariate Mecke formula},  see \cite[Theorem~4.4]{bookLast17lecturesPP}.

\subsection{The Unmarked Case}

In this subsection, we restrict attention to the unmarked case, or equivalently, when $\Xi$ has only one point. In this case, the following definition is given in~\cite[Appendix~A]{LaPeYo23stopping}.

\begin{definition}[Stopping Sets~\cite{LaPeYo23stopping}]
	\label{def:stopping}
	A \defstyle{stopping set} is a map $Z:\bs N\to\mathcal X$ such that:
	\begin{enumerate}[label=(\roman*)]
		\item \label{def:stopping-equiv} [Graph-measurability] The function $(x,\varphi)\mapsto \identity{Z(\varphi)}(x)$ is a measurable function on $\mathbb X\times\bs N$. 
		\item \label{def:stopping-stopping} [Stopping Property] If $\varphi,\varphi'\in\bs N$ satisfy $\restrict{\varphi'}{Z(\varphi)}=\restrict{\varphi}{Z(\varphi)}$, then $Z(\varphi')=Z(\varphi)$.
	\end{enumerate}
\end{definition}

The stopping property implies $Z(\varphi)=Z(\restrict{\varphi}{Z(\varphi)})$. Thus, the observed configuration determines the stopping set itself. The following theorem identifies the conditional law of the remaining configuration.


We need the following characterization of conditionally Poisson point processes. In this proposition, a \defstyle{locally-finite kernel} is a function $(y,A)\mapsto \nu(y,A)$, defined for $y\in \mathbb Y$ and $A\in\mathcal X$, such that $\nu(\cdot,A)$ is measurable and $\nu(y,\cdot)$ is a locally-finite measure.

\begin{proposition}[Conditionally Poisson Point Processes]
	\label{prop:conditionalPoisson}
	Let $\mathbb Y$ be another measurable space, $\Phi$ be a point process in $\mathbb X$ and $Y$ be a random element of $\mathbb Y$, defined on the same probability space. Let $\nu$ be a locally-finite kernel from $\mathbb Y$ to $\mathbb X$, defined above. Consider the following assertion:
	\begin{equation}
		\label{prop:conditionalPoisson-1}
		\parbox{0.7\textwidth}{
			Conditionally on $Y$, $\Phi$ is a Poisson point process with intensity measure $\nu(Y,\cdot)$; i.e., the distribution of $(\Phi,Y)$ is obtained by constructing $Y$ and then constructing a Poisson point process with intensity measure $\nu(Y,\cdot)$. 
		}
	\end{equation}
	One has:
	\begin{enumerate}[label=(\roman*)]
		\item \label{prop:conditionalPoisson-simple} 
		If $\Phi$ is simple and $\nu(Y,\cdot)$ is diffuse for almost every sample of $Y$, then \Cref{prop:conditionalPoisson-1} is equivalent to the condition that for all measurable subsets $A_1\subseteq \mathbb X$ and $A_2\subseteq \mathbb Y$, one has
		\begin{equation}
			\label{prop:conditionalPoisson-i}
			\myprob{\Phi(A_1)=0, Y\in A_2} = \omid{e^{-\nu(Y,A_1)}\identity{A_2}(Y)}
		\end{equation} 
		\item\label{prop:conditionalPoisson-moment} 
		Even in the non-simple case, \Cref{prop:conditionalPoisson-1} is equivalent to the condition that, 
		for all $n\geq 1$ and measurable $C_1\subseteq \mathbb X^n$ and $C_2\subseteq \mathbb Y$, one has
		\begin{equation}
			\label{prop:conditionalPoisson-ii}
			\omid{\Phi^{(n)}(C_1)\identity{C_2}(Y)} = \omid{\left(\nu(Y,\cdot)^{\otimes n}\right)(C_1) \identity{C_2}(Y)},
		\end{equation}
		where $\Phi^{(n)}$ is the $n$-th factorial measure of $\Phi$ (see~(4.4) in~\cite{bookLast17lecturesPP}).
	\end{enumerate}
\end{proposition}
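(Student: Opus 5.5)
The plan is to reduce both parts to known unconditional characterizations of Poisson processes, applied to the conditional law of $\Phi$ given $Y$. Since $\mathbb X$ is a Borel space, $\bs N$ is also Borel. Hence there is a regular conditional distribution $\kappa(y,\cdot)$ of $\Phi$ given $Y=y$, that is, a probability kernel from $\mathbb Y$ to $\bs N$ with $\myprob{\Phi\in B, Y\in A_2}=\omid{\kappa(Y,B)\identity{A_2}(Y)}$. Assertion \Cref{prop:conditionalPoisson-1} is exactly the statement that, for $P_Y$-a.e.\ $y$, $\kappa(y,\cdot)$ is the law $\Pi_{\nu(y,\cdot)}$ of a Poisson process with intensity $\nu(y,\cdot)$.

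The forward directions are routine. Under \Cref{prop:conditionalPoisson-1}, conditioning on $Y$ gives $\myprob{\Phi(A_1)=0\mid Y}=e^{-\nu(Y,A_1)}$. For part (ii), one uses the fact that the $n$-th factorial moment measure of a PPP with intensity $\lambda$ is $\lambda^{\otimes n}$. Integrating these conditional identities against $\identity{A_2}(Y)$, respectively $\identity{C_2}(Y)$, yields \Cref{prop:conditionalPoisson-i} and \Cref{prop:conditionalPoisson-ii}. The measurability of $y\mapsto\nu(y,\cdot)^{\otimes n}(C_1)$ follows from a monotone class argument starting from rectangles.

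For the converse directions, fix a countable $\pi$-system $\mathcal C\subseteq\mathcal X$ that generates $\mathcal X$ and consists of sets in $\mathcal X_0$; this exists because $\mathcal X$ is countably generated. We may close $\mathcal C$ under finite unions intersected with the $W_n$.

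\emph{Part (i).} The hypothesis gives $\kappa(Y,\{\varphi:\varphi(A_1)=0\})=e^{-\nu(Y,A_1)}$ a.s.\ for each fixed $A_1$. Because $\mathcal C$ is countable, there is a single null set off which this holds for all $A_1$ in the ring generated by $\mathcal C$. We also use that $\Phi$ is simple a.s., so $\kappa(y,\cdot)$ is concentrated on simple configurations for a.e.\ $y$, and that $\nu(y,\cdot)$ is diffuse for a.e.\ $y$. Rényi's theorem then applies. It states that a simple point process whose void probabilities agree with those of a PPP with diffuse locally finite intensity on such a generating ring is that PPP (\cite[Theorem~6.10]{bookLast17lecturesPP}). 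The main care needed is that Rényi's theorem requires the void identity on a class large enough to determine the law of simple processes. I would use the ring generated by a countable dissecting system of $\mathcal X_0$, which is available because $\mathbb X$ is Borel, and extend from countably many sets by the standard argument.

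\emph{Part (ii).} Fix $n$ and countably many product sets $C_1=B_1\times\dots\times B_n$ with $B_i\in\mathcal C$. The hypothesis gives, for a.e.\ $y$, that the factorial moment measures of $\kappa(y,\cdot)$ agree with $\nu(y,\cdot)^{\otimes n}$ on all such rectangles and all $n$. By a $\pi$-$\lambda$ argument they agree as measures on $\mathbb X^n$ restricted to $W_m^n$, for every $m$. Under $\kappa(y,\cdot)$ the counts $(\varphi(B_1),\dots,\varphi(B_k))$ for disjoint $B_i\subseteq W_m$ then have joint factorial moments $\prod_i\nu(y,B_i)^{j_i}$. These are the factorial moments of independent Poisson variables. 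Since $\nu(y,W_m)<\infty$, they satisfy the growth bound $\omid{\varphi(W_m)^{(j)}}=\nu(y,W_m)^j$, and so the moment problem is determinate: the generating function $\sum_j t^j\nu^j/j!$ is entire. Hence the finite-dimensional distributions of $\kappa(y,\cdot)$ on disjoint sets of $\mathcal C$ coincide with those of $\Pi_{\nu(y,\cdot)}$, and these determine a law on $\bs N$.

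The main obstacle I expect is bookkeeping the null sets. The hypothesis holds for each fixed test set, but the conclusion is about $\kappa(y,\cdot)$ for a.e.\ $y$ simultaneously. This is why everything is routed through countable generating classes, together with the requirement that the sets lie in $\mathcal X_0$, so that local finiteness makes the moments finite and determinate.
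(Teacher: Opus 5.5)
Your proposal is correct, but it takes a different route from the paper. You disintegrate: you pass to a regular conditional law $\kappa(y,\cdot)$ of $\Phi$ given $Y=y$. You then apply the classical unconditional characterizations to $\kappa(y,\cdot)$ for a.e.\ $y$, namely R\'enyi's theorem for (i) and the factorial-moment/generating-function argument for (ii). Null sets are controlled through countable generating rings in $\mathcal X_0$.

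The paper never disintegrates. For (i) it conditions only on events $\{Y\in A_2\}$ of positive probability and compares $\Phi$ with the Cox process $\Psi$, which given $Y$ is Poisson with intensity $\nu(Y,\cdot)$. Both are simple under $\myprob{\cdot\mid Y\in A_2}$, so uniqueness by void probabilities (Last--Penrose, Theorem~6.11) gives $\omid{f(\Phi)\identity{A_2}(Y)}=\omid{f(\Psi)\identity{A_2}(Y)}$, hence $(\Phi,Y)\sim(\Psi,Y)$. For (ii) it lifts both processes to $\Phi'=\sum_{x\in\Phi}\delta_{(x,Y)}$ and $\Psi'$ on $\mathbb X\times\mathbb Y$, so that the dependence on $Y$ is encoded in the factorial moment measures. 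It then applies Last--Penrose, Proposition~4.12, after localizing on $\{\nu(Y,W_i)\le M\}$ to obtain the needed bound $M^n$.

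The paper's route uses the hypotheses only in integrated form and for all $A_1\in\mathcal X$ at once. It therefore needs no regular conditional distributions, no Borel structure on $\bs N$, and no null-set bookkeeping, and it uses R\'enyi-type uniqueness exactly as stated in Last--Penrose.

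Your route makes the conditional law explicit, and the moment bound becomes trivial: for fixed $y$ it is just $\nu(y,W_m)^j$, whereas the paper has to manufacture it via $E_{i,M}$. The price has three parts:
\begin{itemize}
\item You need the Borel property of $\bs N$ to obtain $\kappa$.
\item You need a finiteness argument to pass from the integrated identities to a.s.\ identities, which is why your test sets must lie in $\mathcal X_0$.
\item In (i) you use a stronger form of R\'enyi's theorem than the one you cite. Theorem~6.10 of Last--Penrose requires the void identity for every $B\in\mathcal X$, but you only have it on a countable ring, off a single null set.
\end{itemize}
What you actually need in (i) is the dissecting-ring version: for simple point processes, void probabilities on a countable dissecting ring determine the law (as in Kallenberg's \emph{Random Measures}). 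This is the ``standard argument'' you allude to, and with that result cited the step is sound.
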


Part~\ref{prop:conditionalPoisson-simple} generalizes R\'enyi's theorem, which is the case where $Y$ is constant (see~\cite[Theorem~6.10]{bookLast17lecturesPP}).

\begin{proof}
	The implications \eqref{prop:conditionalPoisson-1}$\Rightarrow$\eqref{prop:conditionalPoisson-i} and \eqref{prop:conditionalPoisson-1}$\Rightarrow$\eqref{prop:conditionalPoisson-ii} are clear (see~\cite[Corollary~4.10]{bookLast17lecturesPP}). For \ref{prop:conditionalPoisson-simple}$\Rightarrow$\eqref{prop:conditionalPoisson-1}, 
	let $\Psi$ be the Poisson point process with intensity measure $\nu(Y,\cdot)$ (conditionally on $Y$). 
	The assumption implies that $\probCond{\Phi(A_1)=0}{Y\in A_2} = \probCond{\Psi(A_1)=0}{Y\in A_2}$ provided that $\myprob{Y\in A_2}>0$. So, ~\cite[Theorem~6.11]{bookLast17lecturesPP} implies that the conditional distribution of $\Phi$ conditionally on $Y\in A_2$ coincides with that of $\Psi$. Thus, for every bounded measurable $f$, one has $\omid{f(\Phi)\identity{A_2}(Y)} = \omid{f(\Psi)\identity{A_2}(Y)}$. This implies that $(\Phi,Y)\sim (\Psi,Y)$, which is equivalent to~\eqref{prop:conditionalPoisson-1}.

	

	Now, assume~\ref{prop:conditionalPoisson-moment} holds and define $\Psi$ as above. Let $\Phi':=\{(x,Y): x\in \Phi\}$ and $\Psi':=\{(x,Y): x\in \Psi\}$, respecting the multiplicities, which are point processes in $\mathbb X\times \mathbb Y$. By~\ref{prop:conditionalPoisson-moment}, one obtains 
	\begin{eqnarray*}
		\omid{\Phi'^{(n)}(C_1\times C_2^n)} &=& \omid{\Phi^{(n)}(C_1)\identity{C_2}(Y)}\\
		&=& \omid{\left(\nu(Y,\cdot)^{\otimes n}\right)(C_1) \identity{C_2}(Y)} = \omid{\Psi'^{(n)}(C_1\times C_2^n)}.
	\end{eqnarray*}

	Since both $\Phi'$ and $\Psi'$ lie on the random horizontal section $\mathbb X\times\{Y\}$, one obtains that for every measurable sets $A_{2,1},\ldots,A_{2,n}$ in $\mathbb Y$, and for $A_2:=A_{2,1}\times\cdots\times A_{2,n}$,
	\begin{eqnarray*}
		\omid{\Phi'^{(n)}(C_1\times A_2)} &=& \omid{\Phi'^{(n)}(C_1\times (\cap_i A_{2,i})^n)}\\
		 &=& \omid{\Psi'^{(n)}(C_1\times (\cap_i A_{2,i})^n)} = \omid{\Psi'^{(n)}(C_1\times A_2)}.
	\end{eqnarray*}
	So,  $\Phi'$ and $\Psi'$ share the same factorial moment measures. We will use \cite[Proposition~4.12]{bookLast17lecturesPP} to deduce that $\Phi'$ and $\Psi'$ have the same distribution. Fix $i$ and $M$ such that the event $E_{i,M}:=\{\nu(Y,W_i)\leq M\}$ has positive probability (recall that $W_i\uparrow\mathbb X$). Let $\Phi'_i$ and $\Psi'_i$ be the restrictions of $\Phi'$ and $\Psi'$ to $W_i\times \mathbb Y$ respectively. Conditionally on $E_{i,M}$, the point processes $\Phi'_i$ and $\Psi'_i$ have equal $n$-th factorial moment measures which are bounded by $M^n$. So, \cite[Proposition~4.12]{bookLast17lecturesPP} implies that $\Phi'_i$ and $\Psi'_i$, conditionally on $E_{i,M}$, have the same distribution. Since this holds for all $M$, one obtains that $\Phi'_i\sim \Psi'_i$. Since this holds for all $i$, one gets $\Phi'\sim\Psi'$, and~\eqref{prop:conditionalPoisson-1} is proved.
	%
\end{proof}

\begin{remark}
	\label{rem:Poiss-characterization}
	The above proposition can be extended to marked Poisson point processes using the fact that a marked point process on $\mathbb X$ is just a point process in $\mathbb X\times \Xi$, where $\Xi$ is the mark space.
\end{remark}

\begin{proof}[Proof of \Cref{thm:markov}]
	We will use \Cref{prop:conditionalPoisson} for $\restrict{\Phi}{\mathbb X\setminus Z(\Phi)}$, $Y:=\restrict{\Phi}{Z(\Phi)}$ and $\nu(Y,\cdot):=\restrict{\mu}{\mathbb X\setminus Z(\Phi)}=\restrict{\mu}{\mathbb X\setminus Z(Y)}$.
	Let $n\geq 0$, $f=f(\restrict{\Phi}{Z(\Phi)})$ be a nonnegative measurable function on $\bs N$, and $A\subseteq \mathbb X^n$ be measurable. By \Cref{prop:conditionalPoisson}, it is enough to prove
	\begin{equation}
		\label{eq:thm:markov}
		\omid{\left(\restrict{\Phi}{\mathbb X\setminus Z(\Phi)}\right)^{(n)}(A) f(\restrict{\Phi}{Z(\Phi)})} = \omid{\mu^n\left(A\cap (\mathbb X\setminus Z(\Phi))^n\right) f(\restrict{\Phi}{Z(\Phi)})}.
	\end{equation}
	The left hand side is
	\begin{eqnarray*}
		\omid{\int_{\mathbb X^n}f(\restrict{\Phi}{Z(\Phi)}) \identity{\left(A\cap (\mathbb X\setminus Z(\Phi))^n\right)}(x) d\Phi^{(n)}(x)}.
	\end{eqnarray*}
	By the multivariate Mecke formula \cite[Theorem~4.4]{bookLast17lecturesPP}, and denoting $\delta_{x}=\sum_i \delta_{x_i}$ when $x=(x_1,\ldots,x_n)$, this is equal to
	\begin{eqnarray*}
		\omid{\int_{\mathbb X^n}f(\restrict{(\Phi+\delta_x)}{Z(\Phi+\delta_x)}) \identity{\left(A\cap (\mathbb X\setminus Z(\Phi+\delta_x))^n\right)}(x) d\mu^{n}(x)}.
	\end{eqnarray*}
	The definition of stopping sets implies that, whenever $x\in (\mathbb X\setminus Z(\Phi+\delta_x))^n$, one has $Z(\Phi+\delta_x)=Z(\Phi)$, which in turn implies that $\restrict{(\Phi+\delta_x)}{Z(\Phi+\delta_x)} = \restrict{\Phi}{Z(\Phi)}$ (this is the key idea of using multivariate Mecke formula outside the stopping set). The same conclusions similarly hold whenever $x\in (\mathbb X\setminus Z(\Phi))^n$. Therefore, the above equation is equal to
	\begin{eqnarray*}
		\omid{\int_{\mathbb X^n}f(\restrict{\Phi}{Z(\Phi)}) \identity{\left(A\cap (\mathbb X\setminus Z(\Phi))^n\right)}(x) d\mu^{n}(x)}.
	\end{eqnarray*}
	This is equal to the right hand side of \Cref{eq:thm:markov} and the claim is proved.
\end{proof}


\subsection{The Marked Case}

In the marked case, as mentioned, $\Phi$ is a Poisson point process on $\mathbb X\times\Xi$ with a locally-finite intensity measure $\mu$. As in~\cite[Remarks~A.7]{LaPeYo23stopping}, one may define a stopping set as a graph-measurable map $Z:\bs N(\mathbb X\times\Xi)\to\mathcal X$ similarly to \Cref{def:stopping}. It is straightforward to see that $Z\times\Xi$ is a stopping set on $\mathbb X':=\mathbb X\times\Xi$. Hence, \Cref{thm:markov} provides a version of the spatial Markov property for marked Poisson point processes as well (see~\cite[Remarks~A.7--A.8]{LaPeYo23stopping}).

The author takes another approach to the marked case in~\cite[Appendix~B]{BaKh26indistinguishability} (jointly with Francois Baccelli). Therein, a \defstyle{$d$-marked point process} is a point process $\Phi$ equipped with a marking $m:\Phi^d\to\Xi$. Also, a \defstyle{stopping pair of sets} $Z=(Z_1,Z_2)$ is defined in~\cite{BaKh26indistinguishability}, where $Z_1(\varphi)\in \mathcal X$ describes the set of revealed points of $\varphi$, without revealing their marks, and then $Z_2(\varphi)\subseteq Z_1(\varphi)^d$ describes the set of $d$-tuples of $Z_1(\varphi)$ for which the marks are revealed. Under the finiteness assumption, \cite[Theorem~B.2]{BaKh26indistinguishability} proves a spatial Markov property in this setting. By combining the proof methods of \Cref{thm:markov} and \cite[Theorem~B.2]{BaKh26indistinguishability}, one can prove the following, whose proof is skipped for brevity:

\begin{theorem}[Spatial Markov Property for the $d$-Marked PPP]
	\label{thm:markov2}
	Let $\Phi$ be a Poisson point process with intensity measure $\mu$ and $\bs m$ be an i.i.d. marking of $\Phi^d$. For every stopping pair of sets $Z=(Z_1,Z_2)$, conditionally on $(\restrict{\Phi}{Z_1},\restrict{\bs m}{Z_2})$, the rest of $(\Phi,\bs m)$ is distributed as follows: $\restrict{\Phi}{\mathbb X\setminus Z_1}$ is distributed as a Poisson point process with intensity measure $\restrict{\mu}{\mathbb X\setminus Z_1}$, and conditionally on $\Phi$, $\restrict{\bs m}{\Phi^d\setminus Z_2}$ is an i.i.d. marking of $\Phi^d\setminus Z_2$ with the same mark distribution.
\end{theorem}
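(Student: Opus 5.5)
The plan is to combine the moment-measure criterion of \Cref{prop:conditionalPoisson}\ref{prop:conditionalPoisson-moment} with the Mecke-outside-the-stopping-set trick from the proof of \Cref{thm:markov}, applied in two layers. The first layer handles the points: conditionally on the observation $Y:=(\restrict{\Phi}{Z_1},\restrict{\bs m}{Z_2})$, the process $\restrict{\Phi}{\mathbb X\setminus Z_1}$ should be Poisson with intensity $\restrict{\mu}{\mathbb X\setminus Z_1}$. The second layer handles the marks: the marks of $\Phi^d\setminus Z_2$ should be i.i.d. given $Y$ and $\Phi$.

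To encode both layers in one object, I would view the pair as a point process on an enlarged space. The simplest device is to realise the i.i.d. marking of $\Phi^d$ through an auxiliary independent field: attach to each point $x$ an independent uniform label $u_x\in[0,1]$, and take $\bs m(x_1,\dots,x_d)=g(u_{x_1},\dots,u_{x_d},\xi)$ for an independent i.i.d. family $\xi$ indexed by label tuples. I prefer, however, to follow \cite[Theorem~B.2]{BaKh26indistinguishability} directly. There one first conditions on $\Phi$, and the statement about marks becomes a discrete statement.

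\textbf{Step 1: the marks given $\Phi$.} Given $\Phi=\varphi$, the marks $(m(\bar x))_{\bar x\in\varphi^d}$ are i.i.d. with law $\kappa$. The set $Z_2$ is a stopping set for this countable i.i.d. field once $Z_1$ is fixed: by the stopping property of the pair, $Z_2$ is determined by the marks it reveals, together with $\restrict{\varphi}{Z_1}$. For countable index sets, the strong Markov property of i.i.d. fields under such stopping sets is elementary. Decompose according to the value of the revealed finite-or-countable pattern and check, for cylinder functions $h$ of unrevealed marks, that $\mathbb E[h\,f(\text{revealed})\mid\Phi]=\mathbb E[\kappa^{\otimes}(h)\,f(\text{revealed})\mid\Phi]$. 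The check replaces unrevealed coordinates by independent copies, which does not change $Z_2$ by the stopping property. This is exactly the Mecke-outside idea in a discrete form.

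\textbf{Step 2: the points.} Fix $n$, a measurable $A\subseteq\mathbb X^n$, and a nonnegative $f(Y)$. By the moment criterion of \Cref{prop:conditionalPoisson}, which extends to kernels via \Cref{rem:Poiss-characterization}, it suffices to show
\[
\mathbb E\Bigl[\bigl(\restrict{\Phi}{\mathbb X\setminus Z_1}\bigr)^{(n)}(A)\,f(Y)\Bigr]=\mathbb E\Bigl[\mu^n\bigl(A\cap(\mathbb X\setminus Z_1)^n\bigr)f(Y)\Bigr].
\]
I would apply the multivariate Mecke formula to the marked configuration $(\Phi,\bs m)$. Adding points $x$ to $\Phi$ adds new $d$-tuples carrying fresh independent marks. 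Whenever $x\in(\mathbb X\setminus Z_1(\Phi+\delta_x))^n$, the stopping property gives $Z_1$ unchanged. Every new tuple involves a point outside $Z_1$, so it lies outside $Z_2\subseteq Z_1^d$. Hence $Z_2$ and the revealed marks are unchanged, so $Y$ is unchanged, and the computation proceeds exactly as in the proof of \Cref{thm:markov}.

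\textbf{Step 3: combine.} Step 2 gives the joint law of $(Y,\restrict{\Phi}{\mathbb X\setminus Z_1})$, and Step 1 gives the conditional law of the unrevealed marks given $(Y,\Phi)$. Together these yield the claimed description.

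The main obstacle is making the Mecke argument rigorous with marks attached to tuples. Formally, $(\Phi,\bs m)$ is not a Poisson process on a product space, because marks live on $\Phi^d$ rather than on single points. The first thing to try is to construct the marking from a Poisson process on $\mathbb X\times[0,1]$ (uniform labels) together with an independent i.i.d. field indexed by $[0,1]^d$. Mecke's formula then applies to the labelled Poisson process, and the stopping property is checked on the labelled space.
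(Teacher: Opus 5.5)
Your proposal is correct and follows the route the paper indicates; the paper omits the proof and only says to combine the Mecke-outside-the-stopping-set argument of \Cref{thm:markov} with the method of \cite{BaKh26indistinguishability}, and your Step~2 is that computation with $Z_1$ in place of $Z$, while Step~1 is the discrete replacement-by-independent-copies argument given $\Phi$. Your `main obstacle' does not need a labelled construction, and an i.i.d.\ field indexed by $[0,1]^d$ would in fact hit the classical non-measurability of i.i.d.\ families indexed by a continuum; instead, set $G(x,\varphi):=\int F(x,\varphi,m)\,K(\varphi,dm)$ with $K$ the i.i.d.\ marking kernel and apply the ordinary multivariate Mecke formula to $G$, which gives exactly the identity in which $\Phi+\delta_x$ keeps the old marks on $\Phi^d$ and receives fresh independent marks on the new tuples.
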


\section{On Continuous-Time Decision Trees}
\label{sec:ctdt}

In this section, we define CTDTs and exploration processes, and prove \Cref{thm:ctdt}.
Fix a locally-finite measure $\mu$ on $\mathbb X$ and let $\hat{\mu}$ be the diffuse part of $\mu$ (i.e., $\mu$ minus its atoms). The following definition generalizes that of~\cite{LaPeYo23stopping} by allowing atoms.

\begin{definition}[CTDT]
	\label{def:ctdt}
	A \defstyle{continuous-time decision tree} (CTDT) is a right-continuous nested family $(Z_t)_{t\geq 0}$ of stopping sets such that $Z_t(\varphi)\in\mathcal X_0$ for all $t$ and all $\varphi\in\bs N$.\footnote{In \cite{LaPeYo23stopping}, it is also assumed that $\mu(Z_0(\Phi))=0$ a.s., but we move (an analogue of) this condition to the definition of $\mu$-continuity.} It is called \defstyle{$\mu$-continuous} if, for all $t\geq 0$ and $\varphi\in\bs N$, $\hat\mu(Z_t(\varphi)\setminus Z_{t-}(\varphi))=0$ and $Z_t(\varphi)\setminus Z_{t-}(\varphi)$ contains at most one atom of $\mu$, where $Z_{t-}:=\cup_{s<t} Z_s$ and $Z_{0-}:=\emptyset$. We also denote $Z_{\infty}:=\cup_t Z_t$ and
	\[
	\text{$Z(t,\varphi):=Z_t(\varphi)$ and $Z_-(t,\varphi):=Z_{t-}(\varphi)$}
	\]		
\end{definition}


We now provide an explicit exploration process without relying on the notion of stopping sets.

\begin{definition}[Exploration]
	\label{def:explorationnew}
	Assume that for every finite multiset $B\subseteq \mathbb X$, a one-parameter family $U_t(B)\in\mathcal X_0$ is defined (for $t\geq 0$) such that $B\subseteq U_t(B)$. Assume that $U_s\subseteq U_t$ for $s\leq t$ and $U_t=\bigcap_{s>t} U_s$. Let $U_{t-}:=\bigcup_{s<t} U_s$, where $U_{0-}:=\emptyset$. Assume also that the map $(B,x)\mapsto \identity{\{x\in U_t(B)\}}$ is well defined and measurable for every $t$. 
	Fix a measurable subset $C_0\subseteq \mathbb X$.
	Now, for every $\varphi\in\bs N$,  explore $\varphi$ inductively as follows. Let $\tau_0:=0$, $B_0:=\restrict{\varphi}{C_0}$\footnote{In the proof of \Cref{thm:ctdt}, we will let $C_0$ be the unique atom of $\mu$ in $U_0(\emptyset)$, if there is such an atom.} and $Z_0:=C_0\cup U_0(B_0)$. Given $i\geq 0$, assuming that $B_i, \tau_i$ and $(Z_t)_{t\leq \tau_i}$ are constructed,
	\begin{itemize}
		\item Let $t_i:=\sup\{s: \restrict{\varphi}{U_s(B_i)} = \restrict{\varphi}{U_0(B_i)} \}$ and $\tau_{i+1}:=\tau_i+t_i$.
		\item Let $B_{i+1}:= \restrict{\varphi}{U_{t_i}(B_i)}$. 
		\item For $0<t< t_i$, let $Z_{t+\tau_i}:= Z_{\tau_i} \cup U_t(B_i)$. 
		\item Let $Z_{\tau_{i+1}}:=Z_{\tau_i} \cup U_{t_i}(B_i)\cup U_0(B_{i+1})$.
	\end{itemize}
\end{definition}

Here, $t_i$ is the duration of the $i$-th iteration and $\tau_i$ is its starting time. Since $U_t(B)\in\mathcal X_0$, it is easy to see that $\tau_{i+1}>\tau_i$, and if $\tau_{i+1}<\infty$, then $\restrict{\varphi}{U_{t_i}(B_i)}$ is strictly larger than $\restrict{\varphi}{U_0(B_i)}$; i.e., at least one new point is discovered. The algorithm may have a final stage of infinite duration. The sequence of discovery times may also have a finite limit, denoted by $\tau_\infty:=\lim_i\tau_i$. At the end, the points of $\varphi$ in the explored region $Z_{\infty}:=\cup_t Z_t$ (i.e., $\cup_i B_i$) are revealed.

\begin{remark}
	Note that $U_0(B_{{i+1}})$ may be different from $U_{t_i}(B_i)$ and $B'_{i+1}:=\restrict{\varphi}{U_0(B_{{i+1}})}$ may contain more or less points than $\restrict{\varphi}{U_{t_i}(B_i)}=B_{i+1}$ (this might happen if, e.g., $\varphi$ intersects with $U_0(\emptyset)$ outside $B_0$). 
	This case is counter intuitive since, despite the fact that the points of $B'_{i+1}$ are observed at time $\tau_{i+1}$, the exploration continues in the next step using only the points in $B_{i+1}$ until new points are observed. However, we will show in the proof of \Cref{thm:ctdt} that, if the $\mu$-continuity assumption holds, this case does not happen a.s. But in general, the condition $\restrict{\varphi}{U_s(B_i)} = \restrict{\varphi}{U_0(B_i)}$ in the above definition cannot be replaced by $\restrict{\varphi}{U_s(B_i)} = \restrict{\varphi}{B_i}$. 
\end{remark}

Note that we assumed that the family $U_t(B)$ starts from $t=0$ rather than the time when $B$ is actually observed in the final algorithm. This is because the actual start time in the algorithm can be inferred from $B$. 

The following proposition gives one direction of the relation. The converse, under $\mu$-continuity and for Poisson input, is \Cref{thm:ctdt}.
\begin{proposition}
	\label{prop:ctdt}
	If $(Z_t)_{t\geq 0}$ is defined by an exploration process (\Cref{def:explorationnew}), then it is a CTDT on the interval $[0,\tau_{\infty})$.
\end{proposition}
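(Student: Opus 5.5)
We have to check each requirement of \Cref{def:ctdt} for the family $(Z_t)_{t<\tau_\infty}$ built in \Cref{def:explorationnew}: it is nested and right-continuous, each $Z_t(\varphi)$ lies in $\mathcal X_0$, and each $Z_t$ is a stopping set, i.e.\ graph-measurable and with the stopping property. The first three are read off the construction. Nestedness holds inside each stage because $U_s(B_i)\subseteq U_t(B_i)$ for $s\le t$. At the junction, $Z_{\tau_{i+1}}$ contains $U_{t_i}(B_i)\supseteq U_t(B_i)$ for every $t<t_i$. Right-continuity inside a stage comes from $U_t=\bigcap_{s>t}U_s$. At the junction time $\tau_{i+1}$ we use $Z_{\tau_{i+1}+t}=Z_{\tau_{i+1}}\cup U_t(B_{i+1})$ and $U_0(B_{i+1})=\bigcap_{t>0}U_t(B_{i+1})$. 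For $t<\tau_\infty$, $Z_t$ is a finite union of sets in $\mathcal X_0$, hence lies in $\mathcal X_0$. (This is also where the restriction to $[0,\tau_\infty)$ is needed.)

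The main step is the stopping property. We prove by induction on $i$ that, for each $t\in[\tau_i,\tau_{i+1})$, the following holds: if $\restrict{\varphi'}{Z_t(\varphi)}=\restrict{\varphi}{Z_t(\varphi)}$, then the exploration of $\varphi'$ produces the same $\tau_0,\dots,\tau_i$, the same $B_0,\dots,B_i$, and the same sets $Z_s$ for $s\le t$. The base case works because $B_0=\restrict{\varphi}{C_0}$ with $C_0\subseteq Z_0$. For the inductive step, suppose $\varphi,\varphi'$ agree on $Z_t(\varphi)\supseteq Z_{\tau_{i+1}}(\varphi)$. By induction they share $B_i$ and $\tau_i$. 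Since they agree on $U_{t_i}(B_i)$, the supremum defining $t_i$ is the same for both. Here one must check that the supremum is attained in the right way. For $s<t_i$ we have $U_s\subseteq U_{t_i}$, so agreement there gives the same answer to whether $\varphi$ changes on $U_s(B_i)$. For $s>t_i$, $\varphi$ already changed on $U_{t_i}(B_i)$, because $\varphi$ is locally finite and $U_{t_i}=\bigcap_{s>t_i}U_s$, so new points appear by time $t_i$; the same then holds for $\varphi'$. Consequently $B_{i+1}=\restrict{\varphi}{U_{t_i}(B_i)}=\restrict{\varphi'}{U_{t_i}(B_i)}$, and agreement on $U_0(B_{i+1})\subseteq Z_{\tau_{i+1}}$ gives the same $Z_{\tau_{i+1}}$. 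For $t$ strictly inside a stage, we use only agreement on $U_{t-\tau_i}(B_i)$: this forces $t_i(\varphi')\ge t-\tau_i$, so $\varphi'$ is still in stage $i$ at time $t$.

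The most delicate point is exactly this identification of $t_i$ and of the stage index from partial information: we must show that $\varphi'$ has neither discovered a point earlier nor reached time $t$ in a different stage. Right-continuity of $U$ together with local finiteness handles the boundary case.

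Graph-measurability is handled last. The maps $\varphi\mapsto B_i$ and $\varphi\mapsto\tau_i$ are measurable, by induction, from the measurability of $(B,x)\mapsto\identity{\{x\in U_t(B)\}}$. By monotonicity, $t_i$ can be expressed through rational $s$. Then $\identity{Z_t(\varphi)}(x)$ is a countable combination of the measurable events $\{\tau_i\le t<\tau_{i+1}\}$ and $\{x\in U_{t-\tau_i}(B_i)\}$, and composing measurable maps gives joint measurability in $(x,\varphi)$.
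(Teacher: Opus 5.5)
Your proposal is correct and follows essentially the same route as the paper: nestedness and right-continuity come from the construction, the stopping property comes from showing that the exploration history up to time $t$ is determined by the configuration on $Z_t(\varphi)$, and graph-measurability is obtained by induction over the stages. The only difference is at the stage boundary: the paper passes to a slightly later time $t'\in(t,\tau_{i+1}(\varphi))$ on which the two configurations still agree, whereas you use right-continuity and local finiteness to show that the set of discovery-free times is right-open, which is what upgrades your $t_i(\varphi')\ge t-\tau_i$ to the strict inequality you actually need.
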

\begin{proof}
	It is clear that $(Z_t)_t$ is right-continuous and nested. So, we only need to prove that $Z_t$ is a stopping set for fixed $t\geq 0$.
	Fix $t$ and $\varphi,\psi\in\bs N$ such that $\restrict{\psi}{Z_t(\varphi)}=\restrict{\varphi}{Z_t(\varphi)}$. Assume $\tau_i(\varphi)\leq t<\tau_{i+1}(\varphi)$. Right-continuity of $U$ implies that the same holds for some $t<t'<\tau_{i+1}(\varphi)$. By the algorithm that defines $(Z_s)_s$, one can show that $(Z_s)_{s<t'}$ is completely determined by the points of $\varphi$ in $Z_{t'}(\varphi)$. This implies that $\forall s<t': Z_s(\psi)=Z_s(\varphi)$, which proves that $Z_t$ is a stopping set.
	%
	%
	
	For graph-measurability, let $f=f(x,\varphi):=\identity{Z_t(\varphi)}(x)$. By the assumption on $U$, one can show that, for each $i\geq 0$, the pair $(\tau_{i+1},f\cdot \identity{\{\tau_{i+1}>t\}})$ is a measurable function of $(x,\varphi,\tau_0,\ldots,\tau_i)$. So, it is also a measurable function of $(x,\varphi)$ by induction. Then, graph-measurability is implied by the fact $f = \lim_i f\cdot \identity{\{\tau_{i+1}>t\}}$. So, the claim is proved.
\end{proof}

For the converse; i.e., \Cref{thm:ctdt}, we need to consider the discovery times; i.e., the jump times of ${\Phi}(Z_t(\Phi))$. The next lemma guarantees that $Z$ is a stopping set at those jump times. In the statement, we use an alternative definition of stopping times as follows: A random time $\tau$, assumed to be a measurable function of a stochastic process $(\eta_t)_{t\geq 0}$, is a stopping time if $[0,\tau]$ is a stopping set in a sense similarly to \Cref{def:stopping}:  $\tau$ is measurable and, if $t\mapsto \eta'_t$ is another function that coincides with $\eta$ on $[0,\tau(\eta)]$, then $\tau(\eta')=\tau(\eta)$. Similarly to \cite[Proposition~A.1]{BaLa09}, one can show that this definition is equivalent to the classical one (when the canonical filtration of $\eta$ is used).
\begin{lemma}
	If $(Z_t)_{t\geq 0}$ is a CTDT, and $\tau$ is a stopping time for the process $t\mapsto \restrict{\varphi}{Z_t(\varphi)}$, then $Z_{\tau}$ is a stopping set. 
\end{lemma}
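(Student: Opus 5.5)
We must show that $Z_\tau$, given by $Z_\tau(\varphi):=Z_{\tau(\eta(\varphi))}(\varphi)$ with $\eta_t(\varphi):=\restrict{\varphi}{Z_t(\varphi)}$, satisfies the stopping property and graph-measurability of \Cref{def:stopping}. On the event $\tau=\infty$ we read $Z_\infty=\cup_t Z_t$.

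\textbf{Stopping property.} Fix $\varphi,\psi$ with $\restrict{\psi}{Z_\tau(\varphi)}=\restrict{\varphi}{Z_\tau(\varphi)}$, and write $T:=\tau(\eta(\varphi))$.
\begin{enumerate}
\item For every $t\leq T$ (with $t<\infty$), nestedness gives $Z_t(\varphi)\subseteq Z_T(\varphi)$. Hence $\restrict{\psi}{Z_t(\varphi)}=\restrict{\varphi}{Z_t(\varphi)}$.
\item The stopping property of the single stopping set $Z_t$ then gives $Z_t(\psi)=Z_t(\varphi)$.
\item Consequently $\eta_t(\psi)=\restrict{\psi}{Z_t(\psi)}=\restrict{\psi}{Z_t(\varphi)}=\restrict{\varphi}{Z_t(\varphi)}=\eta_t(\varphi)$.
\item So the paths $\eta(\psi)$ and $\eta(\varphi)$ agree on $[0,T]$. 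The path form of the stopping-time property gives $\tau(\eta(\psi))=T$.
\item Therefore $Z_\tau(\psi)=Z_T(\psi)=Z_T(\varphi)=Z_\tau(\varphi)$, using step 2 at $t=T$.
\end{enumerate}
If $T=\infty$, the same argument applies to all finite $t$, and taking the union over $t$ gives $Z_\infty(\psi)=Z_\infty(\varphi)$.

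\textbf{Graph-measurability.} This is the step I expect to be the main technical obstacle.
\begin{itemize}
\item First, $(x,t,\varphi)\mapsto \identity{Z_t(\varphi)}(x)$ should be jointly measurable. Each $Z_t$ is graph-measurable, and $t\mapsto Z_t$ is nested and right-continuous. So
\[
\identity{Z_t(\varphi)}(x)=\lim_{k\to\infty}\identity{Z_{\lceil 2^k t\rceil 2^{-k}}(\varphi)}(x).
\]
Each approximant is a countable sum of jointly measurable pieces over dyadic intervals, so the limit is jointly measurable.
\item Second, $\varphi\mapsto\tau(\eta(\varphi))$ should be measurable. This needs $\varphi\mapsto\eta(\varphi)$ to be measurable into path space with the product (cylinder) $\sigma$-field, since $\tau$ is assumed measurable with respect to it. For each $t$, $\varphi\mapsto\restrict{\varphi}{Z_t(\varphi)}$ is measurable: for measurable $B$,
\[
\restrict{\varphi}{Z_t(\varphi)}(B)=\int \identity{B}(x)\,\identity{Z_t(\varphi)}(x)\,\varphi(dx),
\]
which is measurable in $\varphi$ by graph-measurability of $Z_t$ and the standard kernel-integration lemma.
\item Composing the two, $(x,\varphi)\mapsto\identity{Z_{\tau(\eta(\varphi))}(\varphi)}(x)$ is the jointly measurable map $(x,t,\varphi)\mapsto\identity{Z_t(\varphi)}(x)$ evaluated at $t=\tau(\eta(\varphi))$. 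This is measurable.
\item On $\{\tau=\infty\}$, use $\identity{Z_\infty(\varphi)}(x)=\lim_n\identity{Z_n(\varphi)}(x)$, which is measurable.
\end{itemize}

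The main point of care is the choice of $\sigma$-field for the path process. If $\tau$ is required to be measurable only with respect to a coarser $\sigma$-field, the composition still works because $\eta$ is measurable coordinatewise. Finally, $Z_\tau(\varphi)\in\mathcal X$ holds since each $Z_t(\varphi)\in\mathcal X_0$, and $Z_\infty(\varphi)$ is a countable increasing union of sets in $\mathcal X$.
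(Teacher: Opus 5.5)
Your proof is correct and follows the paper's argument. The stopping property is derived exactly as in the paper: nestedness, then the stopping property of each $Z_t$ for $t\le\tau(\varphi)$, then the path form of the stopping-time definition. Graph-measurability likewise rests on right-continuity to reduce to countably many times.

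The differences are only in packaging. The paper truncates $\tau$ at $M$ and writes $\{x\in Z_{\tau(\varphi)}(\varphi)\}=\bigcap_{q\in\mathbb Q_{\geq 0}}\bigl(\{x\in Z_q(\varphi)\}\cup\{q<\tau(\varphi)\}\bigr)$, in place of your dyadic joint-measurability-plus-composition step. It also cites Ethier--Kurtz (Prop.~3.7.1) to get measurability of $\varphi\mapsto(\restrict{\varphi}{Z_t(\varphi)})_{t\ge 0}$ into Skorokhod space. That citation is exactly what you would need if $\tau$ were assumed measurable only for the a priori finer Skorokhod Borel $\sigma$-field rather than the cylinder one, a case your ``coarser'' remark does not cover.
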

This lemma extends the entrance-time construction of~\cite[Lemma~2.2]{LaPeYo23stopping}.
Note that the observed history of $\varphi$ also determines the explored sets. Indeed, for each $t$, the stopping property gives $Z_t(\varphi)=Z_t\bigl(\restrict{\varphi}{Z_t(\varphi)}\bigr)$. 
\begin{proof}
	We denote $\tau$ by $\tau(\varphi)$ since it is determined by $\varphi$. 
	Assume $\varphi,\psi\in\bs N$ satisfy $\restrict{\psi}{Z_{\tau(\varphi)}(\varphi)} = \restrict{\varphi}{Z_{\tau(\varphi)}(\varphi)}$. Monotonicity implies that $\restrict{\psi}{Z_{t}(\varphi)} = \restrict{\varphi}{Z_{t}(\varphi)}$ for all $t\leq \tau(\varphi)$. Since $Z_t$ is a stopping set, one obtains that $Z_t(\psi)=Z_t(\varphi)$ for all $t\leq\tau(\varphi)$ (in particular, $Z_{\tau(\varphi)}(\psi)=Z_{\tau(\varphi)}(\varphi)$). So, $\restrict{\psi}{Z_{t}(\psi)} = \restrict{\varphi}{Z_{t}(\varphi)}$ for all $t\leq \tau(\varphi)$. Hence, the fact that $\tau$ is a stopping time implies that $\tau(\psi)=\tau(\varphi)$. Thus, $Z_{\tau(\psi)}(\psi) = Z_{\tau(\varphi)}(\psi) = Z_{\tau(\varphi)}(\varphi)$. 
	
	It remains to prove graph-measurability. By right-continuity and monotonicity, the function $t\mapsto \restrict{\varphi}{Z_t(\varphi)}$ is c\`adl\`ag. Also, the map $\varphi\mapsto \restrict{\varphi}{Z_t(\varphi)}$ is measurable for fixed $t$. So, \cite[Proposition~3.7.1]{bookEtKu09markov} implies that the map $\varphi\mapsto (\restrict{\varphi}{Z_t(\varphi)})_{t\geq 0}$ is measurable. Hence, the map $\varphi\mapsto \tau(\varphi)$ is also measurable. Since $\min\{\tau,M\}\uparrow\tau$, it is enough to assume that $\tau$ is bounded. By right-continuity,
	\begin{eqnarray*}
		\{x\in Z_{\tau(\varphi)}(\varphi)\} &=& \bigcap_{\tau(\varphi)\leq q\in\mathbb Q}\{x\in Z_q(\varphi)\} \\
		&=& \bigcap_{0\leq q\in\mathbb Q} \big(\{x\in Z_q(\varphi)\} \cup \{q<\tau(\varphi)\}\big).
	\end{eqnarray*}
	It follows that $\{x\in Z_{\tau(\varphi)}(\varphi)\}$ is measurable and graph-measurability is proved.
\end{proof}

The next lemma proves the one-by-one discovery property stated in \Cref{thm:ctdt}. 
Another challenge in \Cref{thm:ctdt} is controlling what happens at a discovery time, when the rule may change as soon as a new point is observed. We control this issue by the one-by-one discovery property, and also by using Part~\ref{lem:ctdt-3} of the next lemma. The latter is a technical result that controls how the stopping sets change at discovery times (see \Cref{rem:ctdt-nonequal} below).


\begin{lemma}[Points Are Revealed One By One]
	\label{lem:ctdt}
	If $\mu$ is diffuse, $(Z_t)_t$ is a $\mu$-continuous CTDT and $\Phi$ is a Poisson point process with intensity measure $\mu$, then
	\begin{enumerate}[label=(\roman*)]
		\item \label{lem:ctdt-1} $Z_0(\Phi)=Z_0(\emptyset)$ and ${\Phi}{(Z_0(\Phi))}=0$ a.s., hence, $Z_0(\Phi)$ is essentially constant.
		\item \label{lem:ctdt-2} Almost surely, $\forall t: \card{\Phi\cap (Z_t(\Phi)\setminus Z_{t-}(\Phi))}\leq 1$.
		\item \label{lem:ctdt-3} Almost surely, 
		\begin{eqnarray*}
			\forall u: \Phi\cap Z_u(\Phi\cap Z_{u-}(\Phi))=\Phi\cap Z_u(\Phi).
		\end{eqnarray*}
	\end{enumerate}
\end{lemma}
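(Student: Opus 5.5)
The plan is to use $\mu$-continuity, which for diffuse $\mu$ says $\mu(Z_t(\varphi)\setminus Z_{t-}(\varphi))=0$ for every $t$ and every $\varphi$. We combine this with the stopping property, in the same spirit as the proof of \Cref{thm:markov}. Adding a point outside an explored set leaves that set unchanged, and the (multivariate) Mecke formula turns ``some Poisson point lies in a jump set'' into an integral over deterministic points. Each such integral has $\mu$-null integrand because jump sets are $\mu$-null.

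\emph{Part (i).} Since $Z_{0-}=\emptyset$, $\mu$-continuity gives $\mu(Z_0(\emptyset))=0$, so $\Phi(Z_0(\emptyset))=0$ a.s. On this event $\restrict{\Phi}{Z_0(\emptyset)}=\restrict{\emptyset}{Z_0(\emptyset)}$. The stopping property applied with $\varphi=\emptyset$ and $\varphi'=\Phi$ then yields $Z_0(\Phi)=Z_0(\emptyset)$, and hence $\Phi(Z_0(\Phi))=0$.

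\emph{Part (ii).} Let $N$ be the number of ordered pairs of distinct points $(x,y)$ of $\Phi$ for which some $t$ has $x,y\in Z_t(\Phi)\setminus Z_{t-}(\Phi)$; it suffices to show $\omid{N}=0$. By the bivariate Mecke formula,
\[
\omid{N}=\int\!\!\int \myprob{\exists t:\ x,y\in Z_t(\Phi+\delta_x+\delta_y)\setminus Z_{t-}(\Phi+\delta_x+\delta_y)}\,\mu(dx)\,\mu(dy).
\]
Fix $\Phi$, $x$, $y$ with such a $t=s$, and write $\Phi'=\Phi+\delta_x+\delta_y$. For $r<s$, neither $x$ nor $y$ lies in $Z_r(\Phi')$. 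The stopping property then gives $Z_r(\Phi')=Z_r(\Phi)$, so $Z_{s-}(\Phi')=Z_{s-}(\Phi)$. If neither $x$ nor $y$ were in $Z_s(\Phi)$, the same argument would give $Z_s(\Phi')=Z_s(\Phi)$, which is a contradiction. So, up to symmetry, $x\in Z_s(\Phi)\setminus Z_{s-}(\Phi)$, which forces $s=s(\Phi,x)$ to be the entrance time of $x$ into $Z(\Phi)$. This time depends only on $(\Phi,x)$. We then split according to whether $y\in Z_s(\Phi)$.
\begin{itemize}
\item If $y\in Z_s(\Phi)$, then $y$ lies in the jump set $Z_{s(\Phi,x)}(\Phi)\setminus Z_{s(\Phi,x)-}(\Phi)$. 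For fixed $(\Phi,x)$ this set is $\mu$-null, so the $y$-integral vanishes by Fubini.
\item If $y\notin Z_s(\Phi)$, I would compare with $\psi:=\Phi+\delta_x$ and aim to show that $y$ lies in the jump of $Z(\psi)$ at the entrance time of $y$. This time depends only on $(\psi,y)$, so integrating first in $x$ (or in $y$, as appropriate) again meets a $\mu$-null set.
\end{itemize}
The second case is the step I expect to be the main obstacle. The natural stopping-property comparisons are circular: to identify $Z_r(\psi)$ with $Z_r(\Phi')$ one must already know where $y$ is. My first attempt would be to use that $\Phi(\{x,y\})=0$ a.s. for fixed $x,y$, together with measurability of entrance times (the preceding lemma), and to run the comparison along the c\`adl\`ag path $t\mapsto \restrict{\psi}{Z_t(\psi)}$. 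If that fails, I would instead show that a.s. no Poisson point of $\Phi$ sits in a jump set at a time that is not its own entrance time. This would be proved by a one-point Mecke argument, and the pair statement would then be reduced to it.

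\emph{Part (iii).} Fix $u$. By (ii), at most one point $x$ of $\Phi$ lies in $Z_u(\Phi)\setminus Z_{u-}(\Phi)$. If there is none, then $\restrict{\Phi}{Z_u(\Phi)}=\Phi\cap Z_{u-}(\Phi)$. The stopping property gives $Z_u(\Phi\cap Z_{u-}(\Phi))=Z_u(\Phi)$, and hence the identity. If there is one, apply the one-point Mecke formula to $\Phi=\Psi+\delta_x$. The bad event is that the identity fails at the entrance time $u$ of $x$. In that case $x$ must belong to the jump set of $Z(\Psi)$, or of $Z(\Psi\cap Z_{u-})$, at a time determined by $\Psi$ alone, because points outside the explored set do not affect it. That set is $\mu$-null, so the expected number of bad points is $0$. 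Since there are only countably many discovery times, the statement holds simultaneously for all $u$.
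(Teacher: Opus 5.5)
Part (i) matches the paper, and your first case in (ii) is correct. The gap is the second case of (ii), which is the heart of the lemma: you leave it open, and the target you set would not close it. Knowing that $y$ lies in the jump of $Z(\psi)$ ``at the entrance time of $y$'' tells you nothing, because that time depends on $(\psi,y)$. Every $y\in\bigcup_t Z_t(\psi)$ lies in the jump at its own entrance time, so integrating that indicator in $y$ gives $\mu(\bigcup_t Z_t(\psi))$, not $0$. Integrating first in $x$ does not help either, since $\psi=\Phi+\delta_x$ moves with $x$. The time must depend on $(\Phi,x)$ only, and the right choice is $s=s(\Phi,x)$, the entrance time of $x$ into $Z(\Phi)$.

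There is also no circularity. For $r<s$ you know $x,y\notin Z_r(\Phi')$, so the comparison you already used gives $Z_{s-}(\psi)=Z_{s-}(\Phi')\not\ni y$. If $y\notin Z_s(\psi)$, then $\Phi'$ and $\psi$ agree on $Z_s(\psi)$, whence $Z_s(\Phi')=Z_s(\psi)\not\ni y$, a contradiction. Thus $y\in Z_s(\psi)\setminus Z_{s-}(\psi)$ with $s=s(\Phi,x)$. For fixed $(\Phi,x)$ this set is $\mu$-null by $\mu$-continuity applied to the configuration $\Phi+\delta_x$. The argument does not care whether $y\in Z_s(\Phi)$, so it handles both of your cases at once. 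Together with the symmetric bound ($x,y$ swapped), Fubini gives $\omid{N}=0$.

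Part (iii) is missing the same identification: ``a time determined by $\Psi$ alone'' is never specified, and $u$ itself depends on $x$. Here is the missing step. Write $\Phi=\Psi+\delta_x$, let $u$ be the entrance time of $x$ into $Z(\Phi)$, and set $B:=\Phi\cap Z_{u-}(\Phi)=\Psi\cap Z_{u-}(\Psi)$. The stopping property gives $x\in Z_u(\Psi)\setminus Z_{u-}(\Psi)$.
\begin{itemize}
\item If $\Psi\cap Z_u(\Psi)=B$, then $Z_u(\Psi)=Z_u(B)$, hence $\Phi\cap Z_u(B)=B+\delta_x$.
\item By (ii), this equals $\Phi\cap Z_u(\Phi)$; (ii) holds for $\Psi+\delta_x$ for $\mu$-a.e.\ $x$ by Mecke.
\end{itemize}
So failure of the identity forces $u$ to be one of the countably many discovery times $\tau_j(\Psi)$ of $\Psi$'s own exploration. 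Then $x\in\bigcup_j \bigl(Z_{\tau_j(\Psi)}(\Psi)\setminus Z_{\tau_j(\Psi)-}(\Psi)\bigr)$, a $\mu$-null set determined by $\Psi$.

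With these two repairs your route is valid and genuinely differs from the paper's. The paper freezes the configuration $B_i$ seen at the $i$-th discovery time. It applies the bivariate Mecke formula to $|\Phi\cap W|(|\Phi\cap W|-1)$, with $W$ the jump of $Z(B_i)$ at the next discovery time. It then needs a second, one-point Mecke argument to pass from $Z_t(B_i)$ to $Z_t(\Phi)$, and (iii) falls out of that step. Your direct count of pairs with a common entrance time into $Z(\Phi)$ avoids the freezing and the transfer step for (ii), at the cost of a separate argument for (iii).
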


\begin{remark}
	\label{rem:lem:ctdt}
	If $\mu$ is not diffuse, the first two parts of the lemma should be modified as follows: For all $t\geq 0$, the set $Z_t(\Phi)\setminus Z_{t-}(\Phi)$ contains at most one multi-point of $\Phi$ a.s. In particular, $Z_0(\Phi)$ also contains at most one multi-point of $\Phi$ (which can only be an atom of $\mu$ in $Z_0(\emptyset)$). The proof is similar and is omitted for brevity (note that, in~\eqref{eq:lem:ctdt-1}, the number of multi-points should be considered without considering their multiplicities).
\end{remark}

\begin{remark}
	\label{rem:ctdt-nonequal}
	In Part~\ref{lem:ctdt-3} of \Cref{lem:ctdt}, one might guess that $Z_u(\Phi\cap Z_{u-}(\Phi))=Z_u(\Phi)$ a.s. This holds for nice CTDTs, but does not hold in general. 
	For example, in the plane, let $\cball{u}{0}$ be the closed ball of radius $u$ around the origin, and let $Z_u(\Phi)$ be the union of $\cball{u}{0}$ and the segments $[x,2x]$ for all $x\in \Phi\cap \cball{u}{0}$.
\end{remark}

\begin{proof}[Proof of \Cref{lem:ctdt}]
	\ref{lem:ctdt-1}.
	One has $\mu(Z_0(\emptyset))=0$, and hence, $\Phi\cap Z_0(\emptyset)=\emptyset$ a.s. On this event, the definition of stopping sets implies that $Z_0(\Phi)=Z_0(\emptyset)$. So, the first claim is proved.
	
	\ref{lem:ctdt-2}. For $\varphi\in\bs N$, let $\tau_i(\varphi)$ be the time of the $i$'th jump of ${\varphi}\cap{Z_t(\varphi)}$; i.e., $\tau_0:=0$ and $\tau_{i+1}(\varphi):=\min\{u>\tau_i(\varphi): \varphi\cap Z_u(\varphi)\neq \varphi\cap Z_{\tau_{i}(\varphi)}(\varphi)\}$ (the facts $\varphi\in \bs N$, $Z_u(\varphi)\in\mathcal X_0$ and right continuity of $(Z_t)_t$ imply that the minimum is attained). For $\tau_i<u<\tau_{i+1}$, one has  ${\varphi}\cap{Z_{u-}(\varphi)}={\varphi}\cap{Z_u(\varphi)}$. So, it is enough to show that, almost surely, 
	\begin{equation}
		\label{eq:lem:ctdt-1}
		\text{$\forall i: \norm{\Phi\cap Z_{\tau_{i+1}(\Phi)}(\Phi)}=1+ \norm{\Phi\cap Z_{\tau_{i}(\Phi)}(\Phi)}$ if $\tau_{i+1}<\infty$.}
	\end{equation}
	Fix $i\geq 0$.
	Let $s:=\tau_i(\Phi)$, $t:=\tau_{i+1}(\Phi)$ and 
	$B_i(\Phi):={\Phi}\cap{Z_s(\Phi)}$. By definition, for $u\in (s,t)$, one has ${\Phi}\cap{Z_u(\Phi)} = B_i(\Phi)$, and hence, $Z_u(\Phi)=Z_u{(B_i(\Phi))}$. In particular, ${\Phi}\cap{Z_{t-}(B_i(\Phi))} = B_i(\Phi)$. We claim that
	\begin{equation}
		\label{eq:lem:ctdt-1.5}
		\Phi\cap Z_t(B_i(\Phi))\neq B_i(\Phi)
	\end{equation}
	if $t<\infty$ (note that $Z_t(B_i(\Phi))$ might be different from $Z_t(\Phi)$ by \Cref{rem:ctdt-nonequal}). Otherwise, the definition of stopping sets would give that $Z_t(\Phi)=Z_t(B_i(\Phi))$, and thus, $\Phi\cap Z_{\tau_{i+1}(\Phi)}(\Phi)=B_i(\Phi)$, a contradiction with the definition of $\tau_{i+1}(\Phi)$. So, we have proved that 
	\[
	\text{$\tau_{i+1}(\Phi)=\tau'_{i+1}(\Phi)$, where }\tau'_{i+1}(\Phi):=\min\{u>\tau_i(\Phi): \Phi\cap Z_u(B_i(\Phi))\neq B_i(\Phi)\}.
	\]
	We now prove that, almost surely,
	\begin{equation}
		\label{eq:lem:ctdt-2}
		\card{\Phi\cap Z_{\tau_{i+1}(\Phi)}(B_i(\Phi))} = \card{B_i(\Phi)}+1 \text{ if $\tau_{i+1}(\Phi)<\infty$.}
	\end{equation}
	Let $W(\Phi):=Z(\tau'_{i+1}(\Phi),B_i(\Phi))\setminus Z_-(\tau'_{i+1}(\Phi),B_i(\Phi))$ if $\tau'_{i+1}<\infty$, and $W(\Phi):=\emptyset$ otherwise. Since $\tau_{i+1}(\Phi)=\tau'_{i+1}(\Phi)$, \eqref{eq:lem:ctdt-2} is equivalent to $\card{\Phi\cap W(\Phi)}=1$ if $\tau'_{i+1}(\Phi)<\infty$, a.s. One has 
	\begin{eqnarray*}
		&&\omid{\card{\Phi\cap W(\Phi)}\cdot\big(\card{\Phi\cap W(\Phi)}-1\big)}\\
		&=& \omid{\sum_{x\in\Phi}\sum_{x\neq y\in \Phi} \identity{\{\{x,y\}\subseteq W(\Phi)\}}}\\
		&=& \omid{\int\int \identity{\{\{x,y\}\subseteq W(\Phi\cup\{x,y\}) \}}d\mu(x)d\mu(y)},
	\end{eqnarray*}
	where we have used the two-variate Mecke formula.
	The reader can verify that, if $\{x,y\}\subseteq W(\Phi\cup\{x,y\})$, then $W(\Phi\cup\{x,y\})=W(\Phi\cup\{x\})$. So, the last equation is at most 
	\begin{eqnarray*}
		&&\omid{\int\int \identity{\{\{x,y\}\subseteq W(\Phi\cup\{x\}) \}}d\mu(x)d\mu(y)}\\
		&=& \omid{\int \mu(W(\Phi\cup\{x\})) \identity{\{x\in W(\Phi\cup\{x\}) \}}d\mu(x)}= 0,
	\end{eqnarray*}
	where we have used the fact that $\mu(W(\Phi\cup\{x\}))=0$, which holds by $\mu$-continuity. So, we have shown that $\omid{\card{\Phi\cap W(\Phi)}\cdot\left(\card{\Phi\cap W(\Phi)}-1\right)}=0$, which implies that $\card{\Phi\cap W(\Phi)}\leq 1$ a.s. Now, \eqref{eq:lem:ctdt-1.5} implies that $\card{\Phi\cap W(\Phi)}=1$ a.s., and \Cref{eq:lem:ctdt-2} is proved.
	
	Since $Z_{\tau_{i+1}(\Phi)}(B_i(\Phi))$ may be different from $B_{i+1}=Z_{\tau_{i+1}(\Phi)}(\Phi)$, we still need some work to deduce~\eqref{eq:lem:ctdt-1} from~\eqref{eq:lem:ctdt-2}.
	Assume $t=\tau_{i+1}(\Phi)<\infty$ and let $\Psi:=B_i(\Phi)$ and $\Psi':=B'_{i+1}(\Phi):=\Phi\cap Z_{t}(\Psi)$. By \Cref{eq:lem:ctdt-2}, we may assume $\Psi'=\Psi\cup\{x\}$ for some $x\in \Phi\setminus \Psi$. The set $Z_t(\Psi')$ might be different from $Z_t(\Psi)$, but since $Z_{t-}(\Psi')=Z_{t-}(\Psi)$, $\mu$-continuity implies that $\mu(Z_t(\Psi')\Delta Z_t(\Psi))=0$. One also has $\Psi\subseteq Z_t(\Psi')$. Similarly to \Cref{eq:lem:ctdt-2}, we will prove that $\left(\Phi\cap Z_t(\Psi')\right)\setminus \Psi'$ is empty a.s. One has
	\begin{eqnarray*}
		\omid{\card{\left(\Phi\cap Z_t(\Psi')\right)\setminus \Psi'}}&=& \omid{\sum_{y\in \Phi} \identity{\{y\in Z_{t}(\Psi')\setminus\Psi'\}}}\\
		&=& \omid{\int \identity{\{y\in Z_{t}(\Psi'(\Phi\cup\{y\}))\setminus\Psi'(\Phi\cup\{y\}) \}}d\mu(y)}.
	\end{eqnarray*}
	In the last term, $t={\tau_{i+1}}$ depends on $\Phi\cup\{y\}$, but the reader can verify that, if the last indicator function is nonzero, then ${t}(\Phi\cup\{y\})={t}(\Phi)$, $\Psi'(\Phi\cup\{y\})=\Psi'(\Phi)$ and $y\in Z_{t}(\Psi')\setminus Z_{{t}-}(\Psi')$. So, the last equation is at most
	\begin{eqnarray*}
		\omid{\int \identity{\{y\in Z_{t}(\Psi'(\Phi))\setminus Z_{{t}-}(\Psi'(\Phi)) \}}d\mu(y)}&=&\omid{\mu(Z_{t}(\Psi')\setminus Z_{{t}-}(\Psi'))}=0.
	\end{eqnarray*}
	This proves that $\left(\Phi\cap Z_t(\Psi')\right)\setminus \Psi'=\emptyset$ a.s. as claimed. Thus, either $\Phi\cap Z_t(\Psi')=\Psi$ or $\Phi\cap Z_t(\Psi')=\Psi'$. In the first case, one has $\Psi\cap Z_t(\Psi')=\Psi'\cap Z_t(\Psi')=\Psi$. So, the definition of stopping sets implies that $Z_t(\Psi)=Z_t(\Psi')$. This contradicts $\Psi'\cap Z_t(\Psi) = \Psi'$. Hence, the second case holds; i.e., $\Phi\cap Z_t(\Psi')=\Psi'$. Again, the definition of stopping sets implies that $Z_t(\Phi)=Z_t(\Psi')$, and hence, $\Phi\cap Z_t(\Phi)=\Psi'$. Now, \eqref{eq:lem:ctdt-2} implies \Cref{eq:lem:ctdt-1} and the proof is completed.
	
	\ref{lem:ctdt-3}.
	Note that only the jump times $u=t=\tau_{i+1}(\Phi)$ are nontrivial here. In this case, $\Phi\cap Z_{t-}(\Phi)=B_i(\Phi)$ in the above notation, and the claim is that $\Phi\cap Z_t(B_i(\Phi))=\Phi\cap Z_t(\Phi)$, which is already proved in the above proof.
\end{proof}

We are now ready to prove \Cref{thm:ctdt}.


\begin{proof}[Proof of \Cref{thm:ctdt}]
	For every finite multiset $B\subseteq \mathbb X$, let $s:=s(B)\in[0,\infty]$ be the minimum time such that $B\subseteq Z_s(B)$. Let $U_t(B):=Z_{t+s}(B)$ if $s<\infty$. In the useless case $s=\infty$, let $\forall t: U_t(B):=B$. It is clear that $U_t(B)$ contains $B$ and is monotone and right-continuous. Also, $\hat\mu(U_0(\emptyset))=0$ and $U_0(\emptyset)$ has at most one atom of $\mu$, namely $x_0$. If $x_0$ exists, construct $\tilde B_i, \tilde\tau_i, \tilde t_i$ and $(\tilde Z_t)_{t\geq 0}$ by the exploration corresponding to $U_t$ starting from revealing $C_0:=\{x_0\}$ (\Cref{def:explorationnew}). Otherwise, do the same starting from $C_0:=\emptyset$. We will prove that $\tilde{Z}_t$ satisfies the claim.
	
	As in the proof of \Cref{lem:ctdt}, let $\tau_i(\Phi)$ be the time of the $i$'th jump of $\restrict{\Phi}{Z_t(\Phi)}$. Let also $\Psi_i:=\restrict{\Phi}{Z(\tau_i(\Phi),\Phi)}$. By induction on $i$, we prove that, almost surely, all $t\in [\tau_i(\Phi),\tau_{i+1}(\Phi))$ satisfy $Z_t(\Phi)=\tilde Z_t$, and also $\tilde \tau_i=\tau_i$ and $\tilde B_i=\Psi_i$. 
	
	The induction basis $i=0$, in the diffuse case, is easily implied by part~\ref{lem:ctdt-1} of \Cref{lem:ctdt}. The general case is as follows.
	If $x_0$ does not exist, then it is clear that $\tilde Z_t(\Phi)=Z_t(\Phi)$ for all $t<\tau_1(\Phi)$. So, assume that $x_0$ exists.	
	By $\mu$-continuity, $C:=\restrict{\Phi}{Z_0(\emptyset)}$ is equal to $\{x_0\}$ with some multiplicity $m_0\geq 0$, and thus, $C=\tilde B_0$.
	Condition on knowing $m_0$.
	If $x_0\not\in Z_0(C)$, then $\restrict{C}{Z_0(C)} =\emptyset$, and hence, $Z_0(C)=Z_0(\emptyset)\ni x_0$, a contradiction. So, $x_0\in Z_0(C)$. Hence, $Z_0(C)$ has no atom of $\mu$ other than $x_0$. 
	Hence, $\mu$-continuity implies that $\restrict{\Phi}{Z_0(C)}=C=\restrict{C}{Z_0(C)}$, and so, $Z_0(\Phi)=Z_0(C)=\tilde Z_0$. Thus, $\restrict{\Phi}{Z_0(\Phi)}=C$.
	Also, for all $t<\tau_1(\Phi)$, one has $\restrict{\Phi}{Z_t(C)}=C=\restrict{C}{Z_t(C)}$. This implies that $Z_t(\Phi)=Z_t(C)=\tilde Z_t$, as desired. So, the induction basis is established.

	
	Now, assuming the claim holds for $1,\ldots,i$, we prove the induction claim for $i+1$. The case $\tau_{i+1}=\infty$ is clear, and hence, assume $\tau_{i+1}<\infty$. In~\eqref{eq:lem:ctdt-1} in the proof of \Cref{lem:ctdt} (see also \Cref{rem:lem:ctdt}), we have shown that $\Psi_{i+1}$ has exactly one (multi-) point more than $\Psi_i$ a.s. This implies that $\tilde\tau_{i+1}(\Phi)=\tau_{i+1}(\Phi)$. Also, it is proved that $Z_{\tau_{i+1}(\Phi)}(\Phi) = Z_{\tau_{i+1}(\Phi)}(\Psi_{i+1})$ and the restriction of $\Phi$ to the latter (which is just $\Psi_{i+1}$) coincides with the restriction of $\Phi$ to $Z_{\tau_{i+1}(\Phi)}(\Psi_{i})$ a.s. This implies that $\tilde B_{i+1} = \Psi_{i+1}$, and that $U_0(\tilde{B}_{i+1}) = Z_{\tau_{i+1}(\Phi)}(\Psi_{i+1}) = Z_{\tau_{i+1}(\Phi)}(\Phi)$. The latter implies that $\restrict{\Phi}{U_0(\tilde{B}_{i+1})} = \tilde{B}_{i+1}$. 
	Also, $U_r(\tilde{B}_{i+1}) = Z_{r+\tau_{i+1}}(\tilde{B}_{i+1})$ by the definition of $U_r$. If $r<\tilde t_{i+1}$, then no new point is discovered up to time $r+\tau_{i+1}$; i.e., $\Phi$ coincides with $\tilde{B}_{i+1}$ on $Z_{r+\tau_{i+1}}(\Phi)$. Since $Z_{r+\tau_{i+1}}$ is a stopping set, one obtains that $Z_{r+\tau_{i+1}}(\Phi) = Z_{r+\tau_{i+1}}(\tilde B_{i+1}) = U_r(\tilde{B}_{i+1})$. Now, the exploration algorithm implies that, for $r<\tilde t_{i+1}$,
	\begin{eqnarray*}
		\tilde Z_{r+\tau_{i+1}} &=& \tilde Z_{\tau_{i+1}-} \cup U_r(\tilde B_{i+1})\\
		&=& Z_{\tau_{i+1}-}(\Phi) \cup Z_{r+\tau_{i+1}}(\Phi) = Z_{r+\tau_{i+1}}(\Phi).
	\end{eqnarray*}
	So, the induction claim is proved and the proof is completed.
\end{proof}



\section{Stopping Sets Attainable by CTDTs}
\label{sec:attain}

We now study whether a given stopping set or a sequence of increasing stopping sets can be obtained by exploration, and prove \Cref{thm:attain,thm:attain2}. 



We slightly modify the definition of attainable stopping sets (\cite{LaPeYo23stopping}) and generalize it to a sequence of stopping sets as follows. Recall that $\Phi$ is a Poisson point process with intensity measure $\mu$.

\begin{definition}
	\label{def:attain}
	For $\mu$ and $\Phi$ as above, a stopping set $Z$ is called \defstyle{attainable by CTDTs} (resp. \defstyle{attainable by exploration}) if there is a $\mu$-continuous CTDT (resp. an exploration process) $(\tilde Z_t)_t$ and a stopping time $\tau$ for the process $t\mapsto \Phi\cap \tilde Z_t(\Phi)$ such that $\mu(\tilde Z_{\tau}(\Phi)\Delta Z(\Phi))=0$ and $\Phi\cap \tilde Z_{\tau}(\Phi) = \Phi\cap Z(\Phi)$ a.s.
	\\
	Similarly, a sequence $Z_1\subseteq Z_2\subseteq\cdots$ of stopping sets is called attainable if there exists a $\mu$-continuous CTDT (resp. an exploration process) $(\tilde Z_t)_t$ and a sequence of stopping times $\tau_1\leq \tau_2\leq\cdots$ as above, such that, for all $i$, $\tilde Z_{\tau_i}(\Phi)$ and $Z_i(\Phi)$ satisfy the same conditions as above. 
\end{definition}

Note that no $\mu$-continuity is assumed for attainability by exploration. But if this $\mu$-continuity holds, then the two definitions are equivalent by \Cref{thm:ctdt}.

Before proving the main results, we provide two examples. \Cref{ex:counterexample} is a modification of \cite[Example~2]{LaPeYo23stopping} and constructs a non-attainable stopping set that reveals at most two Poisson points. 

\begin{example}[A Non-Attainable Stopping Set]
	\label{ex:counterexample}
	We start by a Bernoulli point process $\Phi$ on $\mathbb Z$. We denote sets without brackets and without commas; e.g., $13:=\{1,3\}$. Define $Z(\emptyset):=Z(2):=13$, $Z(3):=Z(13):=23$, $Z(12):=Z(123):=12$ and $Z(1):=Z(23):=123$. For all other $\varphi$, let $Z(\varphi):=Z(\varphi\cap 123)$. One can verify that $Z$ is a stopping set that reveals at most two Bernoulli points. Also, $Z$ cannot be attained by an exploration since the first explored point should belong to $Z(\varphi)$ for all $\varphi$.
	\\
	For the Poisson point process on $\mathbb R$, modify the last definition by replacing each $i\in 123$ with the interval $[i,\tau_i]$, where $\tau_i$ is the first point in $[i,i+1]$ (and with $[i,i+1)$ if there is no such point). One can similarly prove non-attainability using \cite[Proposition~B.1]{LaPeYo23stopping}.
\end{example}

The next example provides a stopping set on a countable set that discovers a single point, but is not attainable by a discrete-time exploration.

\begin{example}[Countable Underlying Space]
	\label{ex:countable}
	Let $(\mathbb X,\prec)$ be an infinite countable well-ordered set not isomorphic to $\mathbb N$ (e.g., $\mathbb N^2$ with the lexicographic ordering). Let $\mu$ be a measure on $\mathbb X$ such that $\forall x: \mu(\{\cdot\prec x\})<\infty$ and $Z(\varphi):=\{x: \forall y\in \varphi: x\preceq y\}$. It can be seen that $Z$ is a stopping set that discovers at most one point but cannot be attained by a discrete-time exploration.
\end{example}

\begin{remark}
	\label{rem:empty}
	The conditions $Z\in\mathcal X_0$ and $Z_i\in\mathcal X_0$ in \Cref{thm:attain,thm:attain2} can be weakened as follows with the same proofs: For every $\varphi\in\bs N$, either $\varphi\cap Z(\varphi)=\emptyset$ or $Z(\varphi)\in\mathcal X_0$ (this allows, e.g., that $Z(\emptyset)\not\in\mathcal X_0$). However, we need to allow the exploration process (which will be constructed on a subset of the time interval $[0,1]$) to exit $\mathcal X_0$ at time 1. In addition, it is enough to assume this only for almost every $k$-point element of $\bs N$ (according to the measure $\mu^{\otimes k}$) for every $k\geq 1$. Another weakening is that either $\varphi\cap Z(\varphi)=\emptyset$ or $\mu(Z(\varphi))<\infty$, but the constructed exploration may not be in $\mathcal X_0$. In this case, we should apply the exploration only to the PPP  or to typical $k$-point subsets, since it might not be well defined for general $\varphi$ (see \Cref{sec:problems}).
\end{remark}

\begin{proof}[Proof of \Cref{thm:attain}]
	Proving measurability of the constructions is omitted for brevity.
	The following claim holds for every stopping set:
	\begin{claim}
		\label{claim:attain:empty}
		For $\varphi\in \bs N$, $\varphi\cap Z(\emptyset)=\emptyset$ if and only if $\varphi\cap Z(\varphi)=\emptyset$.
	\end{claim}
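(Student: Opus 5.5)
Both directions follow from the stopping property in \Cref{def:stopping} alone, applied to a suitable pair of configurations; no probabilistic input is needed.

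For the direction ``$\Leftarrow$'', assume $\varphi\cap Z(\varphi)=\emptyset$, that is, $\restrict{\varphi}{Z(\varphi)}=0$. Take $\varphi':=\emptyset$ (the zero measure). Then $\restrict{\varphi'}{Z(\varphi)}=0=\restrict{\varphi}{Z(\varphi)}$. The stopping property therefore gives $Z(\emptyset)=Z(\varphi)$. Hence $\varphi\cap Z(\emptyset)=\varphi\cap Z(\varphi)=\emptyset$.

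For the direction ``$\Rightarrow$'', assume $\varphi\cap Z(\emptyset)=\emptyset$. Apply the stopping property with the roles reversed, taking the base configuration to be $\emptyset$ and the comparison configuration to be $\varphi$. The hypothesis says exactly that $\restrict{\varphi}{Z(\emptyset)}=0=\restrict{\emptyset}{Z(\emptyset)}$. So the stopping property yields $Z(\varphi)=Z(\emptyset)$, and thus $\varphi\cap Z(\varphi)=\varphi\cap Z(\emptyset)=\emptyset$.

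There is no real obstacle. The one point to watch is that $\emptyset$ is used as the reference configuration in one direction and $\varphi$ in the other. Since the condition $\restrict{\varphi'}{Z(\varphi)}=\restrict{\varphi}{Z(\varphi)}$ is asymmetric in $\varphi$ and $\varphi'$, the correct set must be used in each case. As a by-product, in either case $Z(\varphi)=Z(\emptyset)$, which is presumably how the claim is used later: configurations revealing no point all share the same stopping set $Z(\emptyset)$.
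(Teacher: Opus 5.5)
Your proof is correct and is the same as the paper's. Each direction is a single application of the stopping property: $\emptyset$ is the base configuration for ``$\Rightarrow$'' and $\varphi$ is the base for ``$\Leftarrow$'', and in both cases this gives $Z(\varphi)=Z(\emptyset)$.
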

	\begin{proof}
		If $\varphi\cap Z(\emptyset)=\emptyset$, then the fact $\varphi \cap Z(\emptyset) = \emptyset\cap Z(\emptyset)$ implies that $Z(\varphi)=Z(\emptyset)$, and hence, $\varphi\cap Z(\varphi)=\emptyset$. Conversely, if $\varphi\cap Z(\varphi)=\emptyset$, then the fact $\varphi\cap Z(\varphi) = \emptyset\cap Z(\varphi)$ implies that $Z(\varphi)=Z(\emptyset)$, and hence, $\varphi\cap Z(\emptyset)=\emptyset$. This proves the claim.
	\end{proof}
	
	Let $E$ be the event that $\Phi$ intersects $Z(\emptyset)$. By \Cref{claim:attain:empty}, outside $E$, one has $\Phi\cap Z(\Phi)=\emptyset$ and $Z(\Phi)=Z(\emptyset)$. Inside $E$, one almost surely has $\card{\Phi \cap Z(\Phi)}=1$. In this case, if $\Phi\cap Z(\Phi)=\{x\}=\{x\}\cap Z(\Phi)$, then one has $Z(x)=Z(\Phi)\ni x$, and thus, $x\in Z(\emptyset)$.
	If $\mu(Z(\emptyset))=0$, then the last arguments show that $\Phi\cap Z(\Phi)=\emptyset$ a.s. and the claim of the theorem is trivial. So, assume $\mu(Z(\emptyset))>0$.

	In the next claim, $\mu^{\otimes k}$ is regarded as a measure on the set of $k$-point elements of $\bs N$.
	\begin{claim}
		\label{claim:attain:k}
		For every $k\geq 1$ and for $\mu^{\otimes k}$-a.e. $\varphi$, if $\varphi \in E$, then $\card{\varphi\cap Z(\varphi)}=1$ and $Z(\varphi)\in\mathcal X_0$.
	\end{claim}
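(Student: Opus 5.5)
The plan is to transfer the almost-sure statements about $\Phi$ to statements about $\mu^{\otimes k}$-typical $k$-point configurations by conditioning on the number of points of $\Phi$ in a window. Since $Z$ is $\mathcal X_0$-valued, the set $Z(\varphi)\in\mathcal X_0$ holds for every $\varphi$. So only the assertion $\card{\varphi\cap Z(\varphi)}=1$ for $\varphi\in E$ needs work. Here $E$ is identified, by abuse of notation, with $\{\varphi:\varphi\cap Z(\emptyset)\neq\emptyset\}$.

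First I localize the problem. By the hypothesis, a.s. on $E$ one has $\card{\Phi\cap Z(\Phi)}\le1$. By Claim~\ref{claim:attain:empty}, on $E$ one has $\Phi\cap Z(\Phi)\neq\emptyset$. Hence a.s. on $E$ one has $\card{\Phi\cap Z(\Phi)}=1$.

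Next I use the window $W_n$. Let $F:=\{\varphi:\varphi\in E,\ \card{\varphi\cap Z(\varphi)}\neq1\}$; it is measurable by graph-measurability of $Z$. Then $\myprob{\Phi\in F}=0$. Fix $n$ and consider the restriction $\Phi_n:=\restrict{\Phi}{W_n}$. Conditionally on $\Phi(W_n)=k$, the points of $\Phi_n$ are i.i.d. with law $\mu|_{W_n}/\mu(W_n)$, and the event $\Phi(W_n)=k$ has positive probability. The difficulty is that $F$ concerns $\Phi$ itself, not $\Phi_n$.

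To handle this, I use the stopping property. Since $Z(\emptyset)\in\mathcal X_0$, we have $Z(\emptyset)\subseteq W_{n_0}$ for some $n_0$. Take a $k$-point $\varphi$ with $\varphi\in E$ and $\varphi\cap Z(\varphi)\neq\emptyset$, and let $x\in\varphi\cap Z(\varphi)$. By the argument in the text, $Z(x)=Z(\varphi)$ whenever $\varphi\cap Z(\varphi)=\{x\}$. More generally, $\varphi\cap Z(\varphi)$ determines $Z(\varphi)$.

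The cleanest route is to use the fact that, for $\mu^{\otimes k}$-a.e.\ $(x_1,\dots,x_k)$, the configuration $\Phi+\delta_{x_1}+\dots+\delta_{x_k}$ satisfies the almost-sure properties of $\Phi$ with positive conditional probability. Concretely, I condition on the event $\{\Phi(V)=0\}$, where $V$ is a large set. By the multivariate Mecke formula, or directly by the Poisson construction on a set $A\in\mathcal X_0$ containing all relevant $Z$'s, $\mathbb P[\Phi\in F]=0$ implies
\[
\int \mathbb P\bigl[\Phi\in F,\ \Phi(A)=0\bigr]\ldots
\]
More simply, I apply the description of $\restrict{\Phi}{A}$ given $\Phi(A)=k$ as i.i.d.\ points. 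On the event $\Phi(A^c\cap Z(\Phi))=0$, the stopping property yields $Z(\Phi)=Z(\restrict{\Phi}{A})$.

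The main obstacle is that $Z(\varphi)$ may leave $A$, so this conditioning does not directly apply. I would handle it by a countable exhaustion over $A=W_m$. For a $k$-point $\varphi$, let $m$ be such that $Z(\varphi)\subseteq W_m$. If $\Phi$ has exactly the points $\varphi$ in $W_m$, then the stopping property gives $Z(\Phi)=Z(\varphi)$ and $\Phi\cap Z(\Phi)=\varphi\cap Z(\varphi)$. The event "$\Phi$ has no further points in $W_m$" has positive probability $e^{-\mu(W_m)}$. Moreover, given it, the $k$ points of $\Phi\cap W_m$ have density proportional to $\mu^{\otimes k}$ on $W_m^k$. So if $G_m:=\{\varphi\in F: Z(\varphi)\subseteq W_m,\ \varphi\subseteq W_m\}$ had positive $\mu^{\otimes k}$ measure, then $\myprob{\Phi\in F}\ge e^{-\mu(W_m)}\mu^{\otimes k}(G_m)/k!>0$, which is a contradiction. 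Taking the union over $m$ covers all $\varphi$, because $Z(\varphi)\in\mathcal X_0$ and $\varphi$ is finite. This proves Claim~\ref{claim:attain:k}.
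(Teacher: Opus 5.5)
Your final argument is correct and is essentially the paper's proof. In both, a $\mu^{\otimes k}$-typical $\varphi$ is realized as the trace of a Poisson process on a window, no further Poisson point falls in $Z(\varphi)$ with positive probability, and the stopping property then gives $Z(\Phi)=Z(\varphi)$, so $\card{\Phi\cap Z(\Phi)}\neq 1$ on an event of positive probability inside $E$, which is a contradiction.

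The only difference is bookkeeping. You enlarge the window to some $W_m\supseteq Z(\varphi)$ and take a countable union over $m$. The paper instead keeps $W_n$ fixed and adds an independent Poisson process outside $W_n$, which misses $Z(\Phi_k)$ with probability $e^{-\mu(Z(\Phi_k)\setminus W_n)}>0$. That variant needs only $\mu(Z(\varphi))<\infty$ (cf.\ Remark~\ref{rem:empty}) and avoids dealing with the measurability of $\{\varphi: Z(\varphi)\subseteq W_m\}$.
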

	\begin{proof}
		Recall that $W_n\uparrow \mathbb X$. Fix $n$ and normalize $\restrict{\mu}{W_n}$ into a probability measure $\nu_n$. Let $\Phi_k$ have distribution $\nu_n^{\otimes k}$. It is enough to prove that $\myprob{A}=0$, where $A$ is the event that $\card{\Phi_k\cap Z(\Phi_k)}>1$ and $Z(\Phi_k)\in \mathcal X_0$ (the last condition is needed in \Cref{rem:empty}). Let $\Phi'_k$ be an independent Poisson point process with intensity measure $\restrict{\mu}{(\mathbb X\setminus W_n)}$, and $\Psi_k:=\Phi_k\cup\Phi'_k$.  If $\myprob{A}>0$, then with positive probability, one has  $\card{\Phi_k\cap Z(\Phi_k)}>1$ and $\Phi'_k\cap Z(\Phi_k)=\emptyset$. In this case, $\Psi_k\cap Z(\Phi_k) = \Phi_k\cap Z(\Phi_k)$, and hence $Z(\Psi_k)=Z(\Phi_k)$. Now, $\card{\Psi_k\cap Z(\Psi_k)}>1$ with positive probability, which contradicts the assumption and the fact that $\Psi_k$ is a Poisson point process conditioned on having $k$ points in $W_n$.
	\end{proof}
	
	\begin{claim}
		\label{claim:attain:<}
		For $\mu^{\otimes 2}$-a.e. $(x,y)$ that intersect $Z(\emptyset)$, either all the conditions $Z(x,y)=Z(x)$, $x\in Z(x)$, $y\not\in Z(x)$ and $\{x,y\}\subseteq Z(y)$ hold, or vice versa.
	\end{claim}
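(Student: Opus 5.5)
We work with $\varphi=\{x,y\}$, a two-point configuration with $\{x,y\}\cap Z(\emptyset)\neq\emptyset$. Since $\mu$ is diffuse, $x\neq y$ for $\mu^{\otimes 2}$-a.e.\ $(x,y)$, so such a $\varphi$ lies in $E$. By \Cref{claim:attain:k} with $k=2$, for a.e.\ such pair we have $\card{\{x,y\}\cap Z(x,y)}=1$. By symmetry, assume the revealed point is $x$, so that $x\in Z(x,y)$ and $y\notin Z(x,y)$. The task is then to derive the four listed conditions from this, using only the stopping property of \Cref{def:stopping}. The case where the revealed point is $y$ gives the ``vice versa'' alternative.

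The key steps are as follows.

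\begin{enumerate}[label=(\arabic*)]
\item \emph{$Z(x,y)=Z(x)$.} Because $y\notin Z(x,y)$, we have $\restrict{\{x,y\}}{Z(x,y)}=\{x\}=\restrict{\{x\}}{Z(x,y)}$. The stopping property then gives $Z(x)=Z(x,y)$.
\item \emph{$x\in Z(x)$ and $y\notin Z(x)$.} Both follow immediately from step (1).
\item \emph{$\{x,y\}\subseteq Z(y)$.} This is the only nontrivial step.
 \begin{itemize}
 \item Suppose first that $y\notin Z(y)$. Then $\restrict{\{y\}}{Z(y)}=\emptyset$, so \Cref{claim:attain:empty} gives $Z(y)=Z(\emptyset)$ and $y\notin Z(\emptyset)$.
 \item Since the pair meets $Z(\emptyset)$, we must then have $x\in Z(\emptyset)$.
 \item Next apply \Cref{claim:attain:empty} to $\{x\}$. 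We have $x\in Z(x)$, and indeed $\{x\}\cap Z(\emptyset)\ne\emptyset$, which is consistent. So this gives no contradiction, and a further argument is needed.
 \item Pass to $\{x,y\}$ versus $\{y\}$. If $x\notin Z(y)$, then $\restrict{\{x,y\}}{Z(y)}=\restrict{\{y\}}{Z(y)}$, hence $Z(x,y)=Z(y)$. This would force $y\in Z(x,y)$ when $y\in Z(y)$, a contradiction, or would force $x\notin Z(x,y)$, also a contradiction.
 \item Therefore $x\in Z(y)$.
 \item Combining with $y\in Z(y)$ gives $\{x,y\}\subseteq Z(y)$.
 \end{itemize}
\end{enumerate}

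The remaining case is $y\notin Z(y)$, where $Z(y)=Z(\emptyset)$ and $x\in Z(\emptyset)$. In that case $x\in Z(y)$ holds automatically. So the real obstacle is to rule out $y\notin Z(y)$.

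To handle it, I would use the a.e.\ formulation rather than work pointwise. Consider the set of pairs with $x$ revealed, $y\notin Z(y)$, and $y\notin Z(\emptyset)$. I would try to show that the condition genuinely required in the claim is $y\in Z(y)$ only when $y\in Z(\emptyset)$. If the intended statement requires $\{x,y\}\subseteq Z(y)$ unconditionally, then I would examine the case $y\notin Z(\emptyset)$ and expect it to be excluded by symmetry considerations: the labelling ``vice versa'' is chosen according to which point is revealed by $Z(x,y)$.

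As a fallback for this case, I would exploit $\mu$-null sets via Fubini. Using \Cref{claim:attain:k} with $k=1$ gives $x\in Z(x)$ for a.e.\ $x\in Z(\emptyset)$. Using it with $k=2$ excludes both points being revealed.

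I expect step (3), and specifically the treatment of $y\notin Z(\emptyset)$, to be the main obstacle.
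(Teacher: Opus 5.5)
Your steps (1)--(2), and your derivation of $x\in Z(y)$ in step (3), are the paper's argument. \Cref{claim:attain:k} gives a single revealed point $x$, and the stopping property gives $Z(x,y)=Z(x)$. If $x\notin Z(y)$, then $\{x,y\}\cap Z(y)=\{y\}\cap Z(y)$, hence $Z(y)=Z(x,y)\ni x$, a contradiction. The proposal is still incomplete, because you never establish $y\in Z(y)$. Your suggested routes for the case $y\notin Z(\emptyset)$ (symmetry, Fubini) cannot work, because the assertion is false there. The paper disposes of this point with ``$y\in Z(y)$ by \Cref{claim:attain:empty}''. But, as you observed, \Cref{claim:attain:empty} applied to $\{y\}$ gives exactly $y\in Z(y)\iff y\in Z(\emptyset)$, so that line is valid only when both points lie in $Z(\emptyset)$.

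Suppose $x\in Z(\emptyset)$ and $y\notin Z(\emptyset)$. Then $Z(y)=Z(\emptyset)$ and $y\notin Z(y)$. The revealed point must be $x$, since a revealed point $w$ satisfies $w\in Z(w)$ and hence $w\in Z(\emptyset)$. So the ``vice versa'' alternative fails as well, and this set is not null in general. On $\mathbb R$ with Lebesgue measure, let $Z(\varphi):=[0,\min(\varphi\cap[0,1])]$ if $\varphi\cap[0,1]\neq\emptyset$, and $Z(\varphi):=[0,1]$ otherwise. This stopping set satisfies all hypotheses of \Cref{thm:attain}. Yet for every $(x,y)\in(0,1)\times(1,2)$ one has $Z(x,y)=Z(x)=[0,x]$ while $Z(y)=[0,1]\not\ni y$.

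So the right conclusion is the one you half-stated. For a.e. pair meeting $Z(\emptyset)$ with $x$ revealed, one gets $Z(x,y)=Z(x)$, $x\in Z(x)$, $y\notin Z(x)$ and $x\in Z(y)$. Additionally, $y\in Z(y)$ holds exactly when $y\in Z(\emptyset)$, so the full statement holds for a.e. $(x,y)\in Z(\emptyset)^2$. This corrected version is all that is used later. \Cref{claim:attain:zero} only uses $y\notin Z(x)$, resp. $x\notin Z(y)$. The definition of $\prec$ and \Cref{claim:attain:transitive} only involve points of $Z(\emptyset)$.
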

	\begin{proof}
		For almost every such $(x,y)$, the previous claims imply that $\{x,y\}\cap Z(x,y)$ has exactly one point. Assume that it is equal to $\{x\}$. It follows that $Z(x)=Z(x,y)$ and $y\not\in Z(x)$. If $x\not\in Z(y)$, then $\{x,y\}\cap Z(y)=\{y\}\cap Z(y)$, which implies that $Z(y)=Z(x,y)\ni x$, a contradiction. So, $x\in Z(y)$. Also, $y\in Z(y)$ by \Cref{claim:attain:empty}.
	\end{proof}
	
	\begin{claim}
		\label{claim:attain:zero}
		One has $\mu(Z(\Phi)\setminus Z(\emptyset))=0$ a.s.
	\end{claim}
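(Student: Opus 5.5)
We must show $\mu(Z(\Phi)\setminus Z(\emptyset))=0$ a.s. The plan is to split according to the event $E$.

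\textbf{Off $E$.} By \Cref{claim:attain:empty}, $Z(\Phi)=Z(\emptyset)$, so the difference is empty.

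\textbf{On $E$.} Almost surely $\Phi\cap Z(\Phi)=\{x\}$ for a single point $x\in Z(\emptyset)$. By the stopping property, $Z(\Phi)=Z(x)$. It therefore suffices to prove the following: for $\mu$-a.e.\ $x\in Z(\emptyset)$,
\[
\mu\bigl(Z(x)\setminus Z(\emptyset)\bigr)=0 .
\]
Indeed, applying the Mecke formula to $\sum_{x\in\Phi}\identity{\{x\in Z(\Phi)\}}\,\mu(Z(x)\setminus Z(\emptyset))$ gives
\[
\omid{\int \identity{\{x\in Z(\Phi\cup\{x\})\}}\,\mu\bigl(Z(x)\setminus Z(\emptyset)\bigr)\,d\mu(x)},
\]
which vanishes under the displayed property. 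Hence $\mu(Z(\Phi)\setminus Z(\emptyset))=0$ a.s.\ on $E$.

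\textbf{The property for $\mu$-a.e.\ $x\in Z(\emptyset)$.} We use \Cref{claim:attain:<} and argue by contradiction. Fix $x\in Z(\emptyset)$ and consider $y\in Z(x)\setminus Z(\emptyset)$.

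Since $y\notin Z(\emptyset)$, we get $\{y\}\cap Z(\emptyset)=\emptyset$, so $Z(y)=Z(\emptyset)$ by \Cref{claim:attain:empty}. Because $x\in Z(\emptyset)$, the pair $(x,y)$ intersects $Z(\emptyset)$, so \Cref{claim:attain:<} applies for $\mu^{\otimes 2}$-a.e.\ such pair. The alternative in which $y\in Z(y)$ is impossible, since $Z(y)=Z(\emptyset)\not\ni y$. So the first alternative holds, and it requires $y\notin Z(x)$. This contradicts $y\in Z(x)$.

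Consequently the set
\[
\{(x,y): x\in Z(\emptyset),\ y\in Z(x)\setminus Z(\emptyset)\}
\]
is $\mu^{\otimes 2}$-null. This set is measurable by graph-measurability, so Fubini gives $\mu(Z(x)\setminus Z(\emptyset))=0$ for $\mu$-a.e.\ $x\in Z(\emptyset)$.

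\textbf{Expected obstacle.} The only delicate point is that \Cref{claim:attain:<} holds only $\mu^{\otimes 2}$-a.e., which is exactly why we pass through the Mecke formula and Fubini rather than arguing pointwise. Diffuseness ensures $x\neq y$ a.e.
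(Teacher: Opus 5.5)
Your proof is correct and follows the paper's argument. You reduce, via \Cref{claim:attain:empty} and the stopping property, to showing $\mu(Z(x)\setminus Z(\emptyset))=0$ for $\mu$-a.e.\ $x\in Z(\emptyset)$, and then show that the pairs with $x\in Z(\emptyset)$ and $y\in Z(x)\setminus Z(\emptyset)$ form a $\mu^{\otimes 2}$-null set, because $Z(y)=Z(\emptyset)$ rules out both alternatives of \Cref{claim:attain:<}. Two minor differences: you kill the second alternative via $y\notin Z(y)$ where the paper uses $x\in Z(y)$, and your Mecke step is a heavier but valid way to make the paper's ``it is enough'' explicit (it suffices that the $\mu$-null bad set a.s.\ contains no point of $\Phi$); for $x\notin Z(\emptyset)$ the integrand vanishes trivially since $Z(x)=Z(\emptyset)$.
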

	\begin{proof}
		By the discussion after \Cref{claim:attain:empty}, it is enough to show that, for $\mu$-a.e. $x\in Z(\emptyset)$, one has $\mu(Z(x)\setminus Z(\emptyset))=0$. If this is false, with positive $\mu^{\otimes 2}$ measure, one has $\{x,y\}\cap Z(\emptyset)=\{x\}$, $y\in Z(x)$ and $\{x,y\}$ satisfies the conditions of \Cref{claim:attain:<}. But this is impossible since both of $y\in Z(x)$ and $x\in Z(y)=Z(\emptyset)$ hold (the latter holds by \Cref{claim:attain:empty}).
	\end{proof}

	For $x\neq y\in Z(\emptyset)$, say $x\prec y$ if $\{x,y\}\cap Z(x,y)=\{x\}$. By the proof of the last claim, it implies that $x\in Z(y)$, $y\not\in Z(x)$, and $y\not\prec x$.
	
	\begin{claim}
		\label{claim:attain:transitive}
		For $\mu^{\otimes 3}$-a.e. $(x,y,z)\in Z(\emptyset)^3$, if $x\prec y$ and $y\prec z$, then $x\prec z$.
	\end{claim}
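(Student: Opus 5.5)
We argue by contradiction. Suppose the set $T$ of triples $(x,y,z)\in Z(\emptyset)^3$ with $x\prec y$, $y\prec z$ and $x\not\prec z$ has positive $\mu^{\otimes 3}$-measure. We may discard a $\mu^{\otimes 3}$-null set on which the conclusions of \Cref{claim:attain:k} (for $k=2,3$), \Cref{claim:attain:<} (for each of the three pairs) and \Cref{claim:attain:zero} fail. Since $\mu$ is diffuse, we may also assume $x,y,z$ are distinct. By \Cref{claim:attain:<}, exactly one of $x\prec z$ and $z\prec x$ holds. Hence on $T$ we have $z\prec x$, so the three points form a cyclic triple $x\prec y\prec z\prec x$. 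The aim is to show that such a cyclic triple is incompatible with the stopping property applied to the configuration $\{x,y,z\}$.

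Consider $Z(x,y,z)$. Since $x\in Z(\emptyset)$, \Cref{claim:attain:k} with $k=3$ shows that $\{x,y,z\}\cap Z(x,y,z)$ is a single point, for almost every triple. By symmetry of the cyclic order, assume it is $\{x\}$. The stopping property then gives $Z(x,y,z)=Z(x)$, and also $Z(x,z)=Z(x)$, because $\{x,z\}\cap Z(x)=\{x\}=\{x,y,z\}\cap Z(x)$. By definition of $\prec$, however, $z\prec x$ means $\{x,z\}\cap Z(x,z)=\{z\}$. Combined with $Z(x,z)=Z(x)\ni x$ (using \Cref{claim:attain:empty}, since $x\in Z(\emptyset)$), this would give $x\in\{x,z\}\cap Z(x,z)=\{z\}$, a contradiction. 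The cases where the single revealed point is $y$ or $z$ are handled identically by rotating the cycle: if it is $y$, use $x\prec y$; if it is $z$, use $y\prec z$.

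The only delicate step is making sure all the almost-everywhere statements can be invoked simultaneously on the same triple. \Cref{claim:attain:<} is stated for $\mu^{\otimes 2}$-a.e.\ pairs, so I would check that its exceptional set, pulled back along each coordinate projection $(x,y,z)\mapsto(x,y)$, $(y,z)$, $(x,z)$, is $\mu^{\otimes 3}$-null. This holds because the projection of $\mu^{\otimes 3}$ restricted to $W_n^3$ is a finite multiple of $\mu^{\otimes 2}$ restricted to $W_n^2$; a union over $n$ then handles the general case. The remaining ingredients are only the stopping property and \Cref{claim:attain:empty}, so I expect no other obstacle.
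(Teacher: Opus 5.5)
Your proof is correct and is essentially the paper's argument: both do a case analysis on which single point of $\{x,y,z\}$ lies in $Z(\{x,y,z\})$ (via \Cref{claim:attain:k}), rule out $y$ and $z$ using $x\prec y$ and $y\prec z$, and use the stopping property to pass from the triple to a pair. The paper argues directly (the remaining case $\{x\}$ gives $Z(\{x,z\})=Z(\{x,y,z\})$, hence $x\prec z$), so your detour through \Cref{claim:attain:<} to get the cyclic relation $z\prec x$ is unnecessary: $\{x,z\}\cap Z(\{x,z\})=\{x\}$ already contradicts $x\not\prec z$.
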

	\begin{proof}
		For simplicity of notation, we denote sets without brackets and commas; e.g., $xyz:=\{x,y,z\}$.
		By \Cref{claim:attain:k}, we may assume that $C:=xyz\cap Z(xyz)$ has exactly one element. If $C=z$, then $xyz\cap Z(xyz)=z\cap Z(xyz)$, and hence, $Z(z)=Z(xyz)\not\ni y$, a contradiction. If $C=y$, then $xyz\cap Z(xyz)= y\cap Z(xyz)$, and hence, $Z(y)=Z(xyz)\not\ni x$, a contradiction. Thus, $C=x$. So, $xyz\cap Z(xyz)= xz\cap Z(xyz)$, and hence, $Z(xz)=Z(xyz)$. Thus, $xz\cap Z(xz)=x$, and so, $x\prec z$.
	\end{proof}
	
	For $x\in Z(\emptyset)$, denote the set $\{y: y\prec x\}$ by $\{\cdot \prec x\}$, and define the set $\{x\prec\cdot\}$ similarly. Let $\mu'$ be any probability measure on $Z(\emptyset)$ that is mutually absolutely continuous with respect to $\restrict{\mu}{Z(\emptyset)}$. Define $f(x):=\mu'(\cdot\prec x)$.
	
	In the countable case, the last claim shows that $\prec$ is a total order on $Z(\emptyset)$. 
	This order determines the desired exploration: explore the space by revealing $V_t:=\{x\in Z(\emptyset): \mu'(\{\cdot\prec x\})\leq t\}$ at time $t$. It is left to the reader to check that this exploration satisfies the claim.
	
	In the diffuse case, the last claim shows that $\prec$ is almost a total order, but we do not know if there exists a full-measure set $A\subseteq Z(\emptyset)$ such that $\prec$ is really a total order on $A$ (after identifying $x$ and $y$ if neither $x\prec y$ nor $y\prec x$). This would simplify the next arguments greatly.

	\begin{claim}
		\label{claim:attain:include}
		For $\mu^{\otimes 2}$-a.e. $(x,y)\in Z(\emptyset)^2$, if $x\prec y$, then $\mu(\{y\prec \cdot \}\setminus\{x \prec \cdot\})=0$ and $\mu(\{\cdot\prec x\}\setminus\{\cdot\prec y\})=0$. In particular, $f(x)\leq f(y)$.
	\end{claim}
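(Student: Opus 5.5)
The plan is to reduce both inclusions to statements about triples and then apply Fubini; the key tool is \Cref{claim:attain:transitive}, used together with its "reverse" variants obtained by the same three-point argument. The second statement reduces to the first after exchanging roles. The inequality $f(x)\leq f(y)$ then follows directly: $\mu'$ is equivalent to $\restrict{\mu}{Z(\emptyset)}$, so $\mu(\{\cdot\prec x\}\setminus\{\cdot\prec y\})=0$ gives $\mu'(\cdot\prec x)\leq\mu'(\cdot\prec y)$.

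For the second inclusion, consider the set $D$ of triples $(x,y,z)\in Z(\emptyset)^3$ with $x\prec y$, $z\prec x$ and $z\not\prec y$. I want to show $\mu^{\otimes 3}(D)=0$. Applying \Cref{claim:attain:transitive} to the pair of relations $z\prec x$ and $x\prec y$ gives $z\prec y$ for $\mu^{\otimes 3}$-a.e. triple. Hence $\mu^{\otimes 3}(D)=0$. By Fubini, for $\mu^{\otimes 2}$-a.e. $(x,y)$ with $x\prec y$, the $z$-section has measure zero, i.e.\ $\mu(\{\cdot\prec x\}\setminus\{\cdot\prec y\})=0$. One point needs care: $\{\cdot\prec x\}$ must be read as a subset of $Z(\emptyset)$. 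Points outside $Z(\emptyset)$ contribute nothing, since $\prec$ is only defined on $Z(\emptyset)$. Measurability of $D$ follows from graph-measurability of $Z$, which the paper already takes for granted.

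For the first inclusion, take triples with $x\prec y$, $y\prec z$ and $x\not\prec z$. Again \Cref{claim:attain:transitive} shows this set is $\mu^{\otimes 3}$-null. Fubini in the $z$-variable then gives $\mu(\{y\prec\cdot\}\setminus\{x\prec\cdot\})=0$ for a.e. $(x,y)$ with $x\prec y$.

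The main subtlety I expect is that \Cref{claim:attain:transitive} is an almost-everywhere statement over labeled triples. Its variables must be matched to ours through a relabeling of coordinates. This is harmless because $\mu^{\otimes 3}$ is invariant under permutations of coordinates. One also needs that the null sets are uniform enough to apply Fubini, which they are since everything is a measurable subset of $\mathbb X^3$. If the exceptional sets in \Cref{claim:attain:<} (asymmetry, $y\not\prec x$) are needed, they are likewise $\mu^{\otimes 2}$-null and can be discarded from the start.
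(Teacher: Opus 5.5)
Your proposal is correct and is essentially the paper's proof. Both inclusions come from Fubini applied to sections of the $\mu^{\otimes 3}$-null set of transitivity violations from the previous claim. Your set $D$ is that same set with its coordinates permuted, whereas the paper takes $x$-sections and renames variables. So no "reverse variants" of the three-point argument are actually needed, only this relabeling. The inequality $f(x)\leq f(y)$ then follows from the second inclusion and $\mu'\ll\mu$ on $Z(\emptyset)$, exactly as you say.
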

	\begin{proof}
		Consider the set $\{x,y,z: x\prec y, y \prec z, x\not\prec z\}$, which has zero measure by the previous claim. The first claim is implied by Fubini's theorem applied to the $z$-sections of this set. The second is implied similarly by considering the $x$-sections and renaming the variables.
	\end{proof}
	
	Let $Y_1,Y_2,\ldots$ be i.i.d. points with distribution $\mu'$. For every $0\leq t\leq 1$, let
	\[
		U_t:=\bigcup_{i: f(Y_i)\leq t} \{\cdot\prec Y_i\},\quad V_t:=\bigcap_{i: f(Y_i)> t} \{\cdot\prec Y_i\},
	\]
	where $U_t=\emptyset$ (resp. $V_t:=Z(\emptyset)$) if there is no $i$ that $f(Y_i)\leq t$ (resp. $f(Y_i)>t$).
	It is clear that, in every realization of $(Y_i)_i$, $U_t$ and $V_t$ are monotone in $t$, and $V_t$ is right-continuous.
	
	\begin{claim}
		\label{claim:attain:=}
		There is an event $E'$ with full probability such that, for every realization of $(Y_i)_i$ in $E'$, one has $\forall t\in [0,1]: \mu'(U_t)=\mu'(V_t)=t$.
	\end{claim}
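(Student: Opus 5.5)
The plan is to first show that $f(Y)$ is uniformly distributed on $[0,1]$ when $Y\sim\mu'$, and then use the Glivenko--Cantelli theorem on the i.i.d. sequence $(f(Y_i))_i$, together with \Cref{claim:attain:include}, to squeeze $\mu'(U_t)$ and $\mu'(V_t)$ to $t$.

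\textbf{Step 1: an a.e. total order.} By \Cref{claim:attain:<}, for $\mu'^{\otimes 2}$-a.e. $(x,y)$ with $x\neq y$, exactly one of $x\prec y$ or $y\prec x$ holds. Since $\mu$ is diffuse, the diagonal is null. Hence, for independent $X,Y\sim\mu'$, the relation $\prec$ is a.s. a strict total relation between $X$ and $Y$.

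\textbf{Step 2: $f(Y)$ is uniform.} Combining Step 1 with \Cref{claim:attain:include}, $x\prec y$ implies $f(x)\leq f(y)$ for a.e. pair. So $\{\cdot\prec x\}$ agrees, up to a $\mu'$-null set, with $\{y: f(y)<f(x)\}$ or with $\{y:f(y)\leq f(x)\}$.

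To make this precise, fix $x$ outside a null set. If $y\prec x$, then $f(y)\leq f(x)$. If $x\prec y$, then $f(x)\le f(y)$, and $\mu'(\{\cdot\prec y\}\setminus\{\cdot\prec x\})\ge \mu'(\{\cdot\prec y\}\cap\{x\prec\cdot\})$. I expect the ties $f(x)=f(y)$ to be negligible. To check this, compute $\mu'(\{y: f(y)\le f(x)\})$ and show it equals $f(x)$ up to ties. The tie set $\{(x,y): f(x)=f(y),\, x\neq y\}$ should be null by the following argument. Restrict to the tie set and suppose it has positive measure. By Step 1 and Fubini, inside it one finds pairs with $x\prec y$ and $f(x)=f(y)$. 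Then $\{\cdot\prec y\}\supseteq\{\cdot\prec x\}\cup\{y\text{-predecessors beyond }x\}$ a.e., and these extra predecessors must carry zero mass. By \Cref{claim:attain:transitive}, this forces a contradiction via Fubini applied to the level sets of $f$, since $\prec$ restricted to a level set is a.e. total with all initial segments null, which is impossible for a set of positive measure. The last point is a Fubini argument: $\mu'^{\otimes2}(x\prec y \text{ within the level set}) = \int \mu'(\{\cdot\prec y\}\cap L)\,d\mu'(y)=0$, but by symmetry and totality it should equal $\tfrac12\mu'(L)^2$.

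With ties negligible, $F(s):=\mu'(f\leq s)$ satisfies $F(f(x))=f(x)$ for a.e. $x$. The same Fubini-symmetry argument shows $\mu'(f\le s)=s$ for all $s\in[0,1]$, by approximating $s$ with values $f(x)$ and using that the range of $f$ is a.e. dense, since there are no gaps of positive measure. Hence $f(Y_i)$ are i.i.d. uniform on $[0,1]$.

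\textbf{Step 3: the limit.} Let $E'$ be the event that $\{f(Y_i)\}$ is dense in $[0,1]$ and that $\mu'(\{\cdot\prec Y_i\}\,\Delta\,\{f<f(Y_i)\})=0$ for all $i$. This event has full probability because countably many null conditions are imposed. On $E'$, $U_t$ coincides up to a null set with $\{f<\sup_{f(Y_i)\le t} f(Y_i)\}$, whose measure is $\sup_{f(Y_i)\le t}f(Y_i)=t$ by density. Similarly, $\mu'(V_t)=\inf_{f(Y_i)>t}f(Y_i)=t$. Since the null sets are fixed for each $i$ rather than depending on $t$, the statement holds for all $t$ simultaneously.

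\textbf{Main obstacle.} I expect the main difficulty to be Step 2, that is, ruling out ties and gaps of $f$ using only the a.e. order properties. The symmetry identity $\mu'^{\otimes 2}(\{x\prec y\}\cap L^2)=\tfrac12\mu'(L)^2$ is the tool I would try first.
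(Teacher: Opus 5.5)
Your proposal is correct. After the shared key step (the law of $f(Y)$ has no atoms), it takes a different route from the paper.

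The paper never identifies the law of $f$. On a full-probability event it shows $\mu'(U_t)=\sup\{f(Y_i): f(Y_i)\le t\}$ and $\mu'(V_t)=\inf\{f(Y_i): f(Y_i)>t\}$. It then proves $\mu'(V_t\setminus U_t)=0$ a.s.\ for each rational $t$ by adjoining an independent $Y\sim\mu'$ and using exchangeability: $Y\in V_t\setminus U_t$ forces $f(Y)$ to be extremal among infinitely many i.i.d.\ copies. Monotonicity then extends the result to all $t$. The paper gets diffuseness from a variance computation on a putative atom, $m=a+p/2$ versus $m^2=a^2+ap+p^2/3$.

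You instead prove a structural fact: $\{\cdot\prec x\}=\{f<f(x)\}$ up to $\mu'$-null sets for a.e.\ $x$, and $f_*\mu'$ is Lebesgue measure on $[0,1]$. So $U_t$ and $V_t$ are a.e.\ sublevel sets of $f$, and the claim reduces to density of an i.i.d.\ uniform sample, with one countable family of null sets serving all $t$ at once. This gives more: $\prec$ agrees $\mu'^{\otimes 2}$-a.e.\ with the genuine total preorder $f(x)<f(y)$, which also makes $\mu(Z(x)\Delta V_{f(x)})=0$ in \Cref{claim:attain:final} immediate. The paper's route, in exchange, never has to compute the law of $f$.

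Two points in your Step 2 are asserted rather than shown. First, you pass from ``$\{z: x\prec z\prec y\}$ is null for a.e.\ $x\prec y$ in $L$'' to ``$\mu'(\{\cdot\prec y\}\cap L)=0$ for a.e.\ $y\in L$'' without argument. This needs the symmetry trick once more. By Fubini, the triples $(x,z,y)\in L^3$ with $x\prec z\prec y$ form a null set. But a.e.\ totality and transitivity, together with permutation invariance of $\mu'^{\otimes 3}$, give this set mass $\mu'(L)^3/6>0$; this contradiction can replace your pair identity entirely. Second, the ``no gaps'' argument for uniformity is vague. Use the probability integral transform instead: the distribution function $F$ of $f(Y)$ is continuous and $F(f(Y))=f(Y)$ a.s., so $f(Y)$ is uniform on $[0,1]$.
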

	\begin{proof}
		We will define $E'$ by countably many conditions which will be added throughout the proof. First, add the condition that $\prec$ is a total order on $(Y_i)_i \cup (x_1,\ldots,x_k)$ for every $k$ and $\mu^{\otimes k}$-a.e. $(x_1,\ldots,x_k)\in Z(\emptyset)^k$. By \Cref{claim:attain:<,claim:attain:transitive}, one can show that this condition is almost surely satisfied. 
		Then, by \Cref{claim:attain:include}, if $f(Y_i)<f(Y_j)$, then $Y_i\prec Y_j$. Also, for almost every $x$, if $f(x)<f(Y_i)$ (resp. $f(x)>f(Y_i)$), then $x\prec Y_i$ (resp. $Y_i\prec x$). In addition, \Cref{claim:attain:include} implies that $\mu'(U_t) = \sup\{f(Y_i): f(Y_i)\leq t \}$. 
		One can similarly show that $\mu'(V_t) = \inf\{f(Y_i): f(Y_i)> t \}$ with the convention $\mu'(V_t):=\mu'(Z(\emptyset))=1$ if there is no such $i$. The previous paragraph shows that $\mu'(U_t\setminus V_t)=0$.
		For every rational $t$, add the condition that $\mu'(V_t)=\mu'(U_t)$. We will show that this condition holds a.s. On this event, monotonicity implies that $\mu'(U_t)=t$ even for irrational $t$.
		
		Fix a rational $t$. We need to prove that $\mu'(V_t\setminus U_t)=0$ a.s. One has 
		\begin{eqnarray*}
			\omid{\mu'(V_t\setminus U_t)} 
			= \myprob{Y\in V_t\setminus U_t},
		\end{eqnarray*}
		where $Y$ is a random point with distribution $\mu'$ independently from $(Y_i)_i$. Let $J_t:=\{i: f(Y_i)\leq t\}$. Almost surely, $f(Y)\leq t$ implies $\forall i\in J_t^c: Y\prec Y_i$; i.e., $Y\in V_t$. Also, $f(Y)<f(Y_i)\leq t$ implies that $Y\in U_t$ a.s. So,
		\begin{eqnarray*}
			\myprob{Y\in V_t\setminus U_t, f(Y)\leq t} &=& \myprob{Y\not\in U_t, f(Y)\leq t}\\
			&\leq& \myprob{\forall i\in J_t: f(Y_i)\leq f(Y)\leq t}.
		\end{eqnarray*}
		If $\mu'(f(\cdot)\leq t)=0$, then the last term is zero. Otherwise, one can write
		\[
			\omidCond{Y\in V_t\setminus U_t}{f(Y)\leq t}\leq \omidCond{\forall i\in J_t: f(Y_i)\leq f(Y)}{f(Y)\leq t}.
		\]
		Conditionally on $f(Y)\leq t$, the sequence $(Y_i)_{i\in J_t}$, together with $Y$, are i.i.d. 
		In the next paragraph, we prove that the distribution of $f(Y_1)$ is diffuse. Assuming this, the last term is zero by exchangeability of $Y,Y_1,Y_2,\ldots$. One can similarly prove that $\myprob{Y\in V_t\setminus U_t, f(Y)> t}=0$. Thus, $\myprob{Y\in V_t\setminus U_t}=0$; i.e., $\mu'(V_t\setminus U_t)=0$, as desired.
		
		It remains to prove that the distribution of $f(Y_1)$ is diffuse. Assume that it has an atom at $m$. Let $p:=\mu'(f(\cdot)=m)>0$, $a:=\mu'(f(\cdot)<m)$ and $\lambda:=a+p$. Let $(Y'_i)_i$ be i.i.d. with distribution $\frac 1{p}\restrict{\mu'}{\{f(\cdot)=m\}}$. One has
		\begin{eqnarray*}
			m &=& \omidCond{f(Y_1)}{f(Y_1)=m} = \omid{\mu'(\cdot\prec Y'_1)} = \omid{\int \identity{\{x\prec Y'_1\}}d\mu'(x)}\\
			&=& \omid{\int \identity{\{x\prec Y'_1\}} \identity{\{f(x)<m\}}d\mu'(x)} + \omid{\int \identity{\{x\prec Y'_1\}} \identity{\{f(x)=m\}}d\mu'(x)}\\
			&=& a + p\myprob{Y'_2\prec Y'_1} = a+ \frac p 2,
		\end{eqnarray*}
		where the last equation holds by symmetry of $(Y'_1,Y'_2)$. Similarly,
		\begin{eqnarray*}
			m^2 &=&\omidCond{f(Y_1)^2}{f(Y_1)=m} = \omid{\mu'(\cdot\prec Y'_1)^2}\\
			&=& \omid{\int\int \identity{\{x\prec Y'_1,y\prec Y'_1\}}d\mu'(x)d\mu'(y)}.
		\end{eqnarray*}
		By splitting the last integral according to four terms according to whether $f(x)$ or $f(y)$ are equal to $m$, one obtains that
		\begin{eqnarray*}
			m^2 &=& a^2 + 2 ap\myprob{Y'_2\prec Y'_1} + p^2 \myprob{Y'_2\prec Y'_1, Y'_3\prec Y'_1}\\
			&=& a^2 + ap + \frac {p^2}{3} > (a+\frac p 2)^2 = m^2,
		\end{eqnarray*}
		a contradiction. So, it is proved that the distribution of $f(Y_1)$ is diffuse, and the proof of \Cref{claim:attain:=} is completed.
	\end{proof}
	
	From now on, fix a realization of $(Y_i)_i$ in the event $E'$ defined in \Cref{claim:attain:=}.	
	Let $\tau:=\tau(\Phi):=\inf\{t: V_t\cap\Phi\neq\emptyset\}$ with the convention $\tau =1$ if $V_{1}\cap\Phi=\emptyset$.
	
	\begin{claim}
		\label{claim:attain:final}
		One has $\mu(V_{\tau(\Phi)}\Delta Z(\Phi))=0$ and $\Phi\cap V_{\tau(\Phi)} = \Phi\cap Z(\Phi)$ a.s.
	\end{claim}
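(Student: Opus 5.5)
We split according to the event $E$ that $\Phi$ meets $Z(\emptyset)$. On $E^c$, \Cref{claim:attain:empty} gives $Z(\Phi)=Z(\emptyset)$ and $\Phi\cap Z(\Phi)=\emptyset$. Since $V_t\subseteq Z(\emptyset)$ for all $t$ and $V_1=Z(\emptyset)$ (no $i$ satisfies $f(Y_i)>1$), we get $\Phi\cap V_1=\emptyset$, so $\tau=1$ and $V_\tau=Z(\emptyset)=Z(\Phi)$. Both assertions then hold exactly on $E^c$.

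On $E$, condition on the number $k\geq 1$ of points of $\Phi$ in $Z(\emptyset)$ (intersected with some $W_n$, as in the proof of \Cref{claim:attain:k}). Given $k$, these points are i.i.d.\ with law proportional to $\restrict{\mu}{Z(\emptyset)}$, and they are independent of the rest of $\Phi$. Points of $\Phi$ outside $Z(\emptyset)$ are irrelevant for $V_\tau$, since $V_t\subseteq Z(\emptyset)$. By \Cref{claim:attain:k}, \Cref{claim:attain:<} and \Cref{claim:attain:transitive}, together with the choice of $E'$, almost surely the points $x_1,\dots,x_k$ of $\Phi\cap Z(\emptyset)$ are totally ordered by $\prec$ jointly with $(Y_i)_i$. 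Let $x$ be the $\prec$-minimum among them. By \Cref{claim:attain:transitive}, $x\prec x_j$ for all $j\neq x$, and the argument in the proof of \Cref{claim:attain:transitive} (repeated with $k$ points) gives $\Phi\cap Z(\Phi)=\{x\}$ and $Z(\Phi)=Z(x)$.

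Next, we identify $\tau$ with $f(x)$. Since $f$ has a diffuse law under $\mu'$ (shown inside \Cref{claim:attain:=}), a.s.\ $f(x)\neq f(Y_i)$ for all $i$, and the values $f(x_j)$ are distinct. Using the relations established in \Cref{claim:attain:=} (for a.e.\ $z$, $f(z)<f(Y_i)$ implies $z\prec Y_i$, and $f(z)>f(Y_i)$ implies $Y_i\prec z$), we obtain $z\in V_t$ iff $f(z)\leq t$, up to $\mu$-null exceptions that $\Phi$ a.s.\ avoids by the Mecke formula. Hence $\tau=\min_j f(x_j)=f(x)$. Moreover $\Phi\cap V_\tau=\{x\}$: the inclusion $x\in V_{f(x)}$ uses right-continuity, and the other $x_j$ have strictly larger $f$-values. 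The stopping-time property of $\tau$ is immediate, since $\tau$ is a first hitting time.

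It remains to show $\mu(V_{f(x)}\Delta Z(x))=0$ for $\mu$-a.e.\ $x$, which is the main obstacle. By \Cref{claim:attain:zero}, $Z(x)\subseteq Z(\emptyset)$ up to null sets. I would prove
\[
Z(x)\cap Z(\emptyset)=\{\cdot\prec x\}\cup\{x\} \quad\text{up to a }\mu\text{-null set.}
\]
The inclusion $\supseteq$ holds because $y\prec x$ implies $y\in Z(x)$ by \Cref{claim:attain:<}. The inclusion $\subseteq$ holds because $x\prec y$ implies $y\notin Z(x)$, and a.e.\ pair is comparable. Then Fubini, \Cref{claim:attain:include} and the relation $f(z)<f(x)\Rightarrow z\prec x$ for a.e.\ $z$ give $\{\cdot\prec x\}=\{z: f(z)\leq f(x)\}=V_{f(x)}$ up to $\mu$-null sets. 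The delicate part is that the ties $\{f=f(x)\}$ must be null. This follows from $\mu'(V_t)=t$ for all $t$ (\Cref{claim:attain:=}), which forces $\mu'(f=t)=0$ for every $t$. Integrating over $x$ via the Mecke formula upgrades these $\mu$-a.e.\ statements to almost sure statements for $\Phi$.
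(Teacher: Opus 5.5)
Your proposal is correct and follows essentially the same route as the paper. It uses the same split according to whether $\Phi$ meets $Z(\emptyset)$, identifies the unique revealed point $x$ as the $\prec$-minimal point of $\Phi\cap Z(\emptyset)$ with $\tau(\Phi)=f(x)$, and gets $\mu(Z(x)\Delta\{\cdot\prec x\})=0$ from a.e.\ comparability and \Cref{claim:attain:<}. The only variation is the final step: the paper concludes from $\{\cdot\prec x\}\subseteq V_{f(x)}$ modulo null sets together with $\mu'(\{\cdot\prec x\})=f(x)=\mu'(V_{f(x)})$, whereas you identify $V_t$ with $\{f\leq t\}$ modulo null sets and use that level sets of $f$ are $\mu'$-null; note that the ``only if'' direction of that identification needs $\{f(Y_i)\}_i$ to be dense in $[0,1]$, which follows from $\mu'(U_t)=t$ in \Cref{claim:attain:=}.
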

	\begin{proof}
		If $\Phi\cap Z(\emptyset)=\emptyset$, then \Cref{claim:attain:empty} shows that $Z(\Phi)=Z(\emptyset)$ and $\tau(\Phi)=1$. In this case, the claim holds since $V_1=Z(\emptyset)$ by definition. 
		
		If $\Phi\cap Z(\emptyset)\neq \emptyset$, as mentioned after \Cref{claim:attain:empty}, one almost surely has $Z(\Phi)=Z(x)\ni x$ for some $x\in \Phi$. Also, \Cref{claim:attain:zero} gives that $\mu(Z(\Phi)\setminus Z(\emptyset))=0$ a.s. Also, almost surely, $\prec$ gives a total order on the union of $\Phi\cap Z(\emptyset)$, $(Y_i)_i$ and almost all tuples $(x_1,\ldots,x_k)\in Z(\emptyset)^k$. Also, the proof of \Cref{claim:attain:=} shows that $(f(y): y\in \Phi\cap Z(\emptyset))$ are distinct a.s. Consider a sample of $\Phi$ that satisfies all these conditions. Since $\Phi\cap Z(x) = \{x\}$, the definition of $\prec$ implies that $x\prec x'$ for all $x\neq x'\in \Phi$, and thus, $f(x)<f(x')$. This implies that $\tau(\Phi)=f(x)$ and $\Phi\cap V_{f(x)} = \{x\}=\Phi\cap Z(\Phi)$. Also, since $x$ is comparable to almost all other points $y\in Z(\emptyset)$, the definition of $\prec$ implies that $y\prec x$ if and only if $y\in Z(x)$ for almost all $y$. Hence, $\mu(Z(x)\Delta \{\cdot\prec x\})=0$. Since $\{\cdot\prec x\}$ is a subset of $V_{f(x)}$ (modulo null sets) and these two sets have equal measures, one gets $\mu(Z(x)\Delta V_{f(x)})=0$. This proves the claim.
	\end{proof}
	
	Now, $(V_t)_{t\leq \tau}$ defines the first iteration of an exploration process up to time $t=1$ and starting from $C_0:=\emptyset$ (see \Cref{def:explorationnew}). There is no need to define the rest of the exploration since we stop at the first time a point is discovered, and we stop at $\tau=1$ if no point is discovered. \Cref{claim:attain:=,claim:attain:final} imply that this exploration process is $\mu$-continuous and satisfies the claim. So, the theorem is proved.
\end{proof}

\begin{proof}[Proof of \Cref{thm:attain2}]
	We only sketch the main steps of the proof.
	\Cref{thm:attain} constructs the first iteration of an exploration process up to time $t=1$.
	In the iterative algorithm in the definition of an exploration process (\Cref{def:explorationnew}), by a \textit{semi-stage} we mean a time interval of the form $(\tau_i+j-1, \min\{\tau_i+j, \tau_{i+1}\}]$ whenever $j\in\mathbb N$ and $\tau_i+j-1<\tau_{i+1}$. Inductively, we use $Z_1,\ldots, Z_n$ to construct the first $n$ semi-stages.
	Assume the first $n$ revealed points $\bar x:= (x_1,\ldots,x_n)$ are known (if the $i$'th iteration does not reveal any new point, let $x_i$ be undefined), which are assumed to satisfy $\forall i\leq n: \bar x\cap Z_i(\bar x)=\{x_1,\ldots,x_i\}$. Let $\bs N_{\bar x}$ be the set of $\varphi\in \bs N$ such that $\forall i\leq n: \varphi\cap Z_i(\varphi)=\{x_1,\ldots,x_i\}$. By applying \Cref{thm:attain} to the space $\mathbb X_{\bar x}:=\mathbb X\setminus Z_n(\bar x)$ and the measure $\restrict{\mu}{\mathbb X\setminus Z_n(\bar x)}$, one can construct an exploration process for $\varphi\in\bs N_{\bar x}$ that attains $Z_{n+1}$ at the first time a new point is revealed (or stopped at time $\tau_n+1$ if no new point is revealed). This constructs the next semi-stage. Hence, the whole exploration process is constructed inductively, which attains $(Z_i)_i$. It remains to show the measurability of this construction, which is omitted for brevity.
\end{proof}

\section{Problems}
\label{sec:problems}

The CTDTs used in \Cref{thm:ctdt} are assumed to be $\mathcal X_0$-valued by definition. Can this assumption be weakened to $\forall t: \mu(Z_t(\Phi))<\infty$? The difficulty is defining an exploration process without the condition $U_t\in X_0$, since infinitely many points may be discovered at once, or at arbitrarily small times. However, we expect this not to happen a.s. for the PPP. See \Cref{rem:empty}.

\Cref{thm:attain,thm:attain2} assume that $\mu$ is either diffuse or totally atomic. What are the analogous statements without these conditions? 
Also, the assumption in the definition of attainability (\Cref{def:attain}) is weaker than $\tilde Z_{\tau}(\Phi)=Z(\Phi)$ (see~\cite[Appendix~B]{LaPeYo23stopping}). Do the attainability theorems hold with this stronger condition? In addition, which point processes can be used in these theorems rather than the PPP? Are there simple attainability criteria for stopping sets with more than one revealed point?

In the proof of \Cref{thm:attain}, does $\prec$ lead to a total order as discussed before \Cref{claim:attain:include}?

\Cref{prop:conditionalPoisson} can be generalized to characterize when $(Y,\Phi)$ and $(Y,\Psi)$ have the same distribution, but needs some moment conditions. In the first version of this paper, the following condition was assumed (in the Poisson case): There exist $c_i>0$ for $i=1,2,\ldots$ such that $\forall i,n: \omid{\Phi^{(n)}(W_i^n)}\leq n!c_i^n$. What weaker moment conditions can be used in the general case?  For instance, is the Carleman-type condition $\forall i: \sum_n \omid{\Phi^{(n)}(W_i^n)}^{-1/n}=\infty$ sufficient?

\section*{Acknowledgements}
This work was initiated while working with Francois Baccelli on~\cite{BaKh26indistinguishability}. We thank him for his comments. AI is not used at all in the statements and proofs except after the first version was published on arXiv, where it was used for minor debugging and the following improvements: For removing the moment condition of \Cref{prop:conditionalPoisson}  (see \Cref{sec:problems}) and modifying its proof, and for finding \Cref{ex:counterexample}. ChatGPT-6 Astra and ChatGPT-6.1 Sol are used for this purpose, and for helping to write the introduction and the texts of the first version without touching the statements and proofs.

\bibliography{bib} 
\bibliographystyle{plain}

\end{document}